\DeclareMathOperator*{\argmin}{arg\,min}
\newtheorem{thm}{Theorem}[section]
\newtheorem{prop}[thm]{Proposition}
\newtheorem{lemma}[thm]{Lemma}
\newtheorem{mydef}[thm]{Definition}
\newtheorem{remark}[thm]{Remark}
\numberwithin{equation}{section} 
\DeclareMathOperator{\sgn}{sgn}
\newcommand{\p}{{\partial}}
\renewcommand{\d}{\delta}
\newcommand{\R}{{\mathbb{R}}}
\newcommand{\Z}{{\mathbb{Z}}}
\newcommand{\T}{{\mathbb{T}}}
\newcommand{\g}{{\mathsf{g}}}
\newcommand{\D}{\Delta}
\newcommand{\nn}{\nonumber}
\newcommand{\ep}{\varepsilon}
\newcommand{\al}{\alpha}
\newcommand{\be}{\beta}
\newcommand{\la}{\lambda}
\newcommand{\Dm}{|\nabla|}
\newcommand{\om}{\omega}
\newcommand{\bmu}{\bar\mu}
\newcommand{\ga}{\gamma}
\let\div\relax
\DeclareMathOperator{\div}{\mathsf{div}}
\def\XXint#1#2#3{{\setbox0=\hbox{$#1{#2#3}{\int}$ }
\vcenter{\hbox{$#2#3$ }}\kern-.6\wd0}}
\def \hal{\frac{1}{2}}
\def\({\left(}
\def\){\right)}
\def \ep{\varepsilon}
\def\nab{\nabla}
\def\indic{\mathbf{1}}
\def\cd{\mathsf{c_{d,s}}}
\title{On clogged and fast diffusions in porous media with fractional pressure}
\author{Antonin Chodron de Courcel\thanks{Laboratoire Alexandre Grothendieck, Institut des Hautes Études Scientifiques, Université Paris-Saclay, CNRS, 35 Routes de Chartres, 91440 Bures-sur-Yvette, France; Email: decourcel@ihes.fr}}
\date{January 2025}
\begin{document}
\maketitle

\begin{abstract}
    We study the existence and infinite-speed propagation of solutions to models arising in porous media, when the mobility is highly degenerate (inverse power law). The approach is based on maximum principles for the fractional Laplacian, and allows to derive lower bounds on solutions in a straightforward manner. Finally, in the case of clogged porous media, where the mobility vanishes at points of unbounded density, solutions that become instantaneously bounded are constructed.
\end{abstract}

\section{Introduction.}

This article concerns the existence and some properties of weak solutions to 
\begin{equation}\label{eq:intro}
\begin{cases}
    \p_t\mu -\div(\mu^m\nab\g\ast\mu) =0,\quad (t,x)\in \R_+\times\T^d,\quad m<1, \\
    \mu\vert_{t=0} = \mu_0.
\end{cases}
\end{equation}
Let us describe in detail the main features of the problem. First, the equation is posed on the $d$-dimensional torus ($d\ge 1$), which can be identified with the periodic box $\left[-\hal,\hal\right]^d$. We shall come back to this restriction in the end of this introduction. Second, the interaction potential, $\g$, is a periodized super-Coulombic potential, i.e.
\begin{equation}\label{eq:defg}
     |\nab|^{d-s}\g = \cd (\d_0-1), \quad s\in [(d-2)_+, d), \qquad \cd\coloneqq \begin{cases} \displaystyle\frac{4^{\frac{d-s}{2}}\Gamma((d-s)/2)\pi^{d/2}}{\Gamma(s/2)}, & {-1\leq s<d} \\ \displaystyle\frac{\Gamma(d/2)(4\pi)^{d/2}}{2}, & {s=0}. \end{cases}
\end{equation}
In particular, it can be proved that $\g$ is a zero mean and smooth modification of $x\mapsto|x|^{-s}$ (see \cite{HSSS17}). Although the reader may assume otherwise, the subsequent discussion applies naturally to any smooth modification of this potential. Finally, one special feature of this system is the nonlinearity $\mu^m$. Indeed, \eqref{eq:intro} can be seen as a continuity equation whose flux $\mathsf{j} = \sigma\mu \mathsf{v}$ is proportional to the velocity $\mathsf{v}=-\nab\g\ast\mu$, and where $\sigma$ is the \emph{mobility} of the particles, which serve to model scattering effects of different kinds. When the dominant part of the scattering is due to the particles themselves, the mobility can be taken to be an inverse power law, that is $m<1$. 

\subsection{Related works.}

Several systems related to \eqref{eq:intro} have been studied in past decades. Let us examine some of what is known, open questions, and the aspects on which we focus.

\paragraph{The constant mobility case.}

The case of constant mobility ($m=1$) and super-Coulombic interaction $((d-2)_+<s<d)$ was first studied in the 2010's by Caffarelli and V\'azquez. Starting from a bounded integrable nonnegative datum with exponential decay at infinity, they proved the existence of a solution with finite propagation speed in \cite{CaffarelliVazquez11a}. They also studied the asymptotic behavior of weak solutions, which are attracted by self-similar profiles \cite{CaffarelliVazquez11b}. Together with Soria \cite{CaffarelliSoriaVazquez13, CaffarelliVazquez16}, they proved the H\"older regularity of bounded solutions and an instantaneous $L^\infty$ regularization result of the form
\begin{equation}\label{eq:prototypeLinftybound}
    \forall t>0, \quad \|\mu(t)\|_{L^\infty} \le Ct^{-\d}\|\mu_0\|_{L^1}^\zeta,\quad \d := \frac{d}{s+2},\  \zeta := \frac{s-d+2}{s+2},
\end{equation}
allowing them to build solutions from merely integrable data. This bound was obtained in the spirit of \cite{CaffarelliVasseurAnnMath} and requires an iteration procedure à la Moser. Such a regularization result is now well-known and has been adapted to other situations, in particular to the minimizing movement scheme approach of \cite{AmbrosioSerfaty08} (where $s=0$, $d=2$) in \cite{LisiniMaininiSegatti18}. We finally note that \eqref{eq:prototypeLinftybound} is a consequence of the strong repulsion of particles when $(d-2)_+<s<d$, but is also true in the (less singular) Coulomb case $s=d-2$. This was already known when $d=2$ \cite{LZ00}, with a proof based on the method of characteristics. When $d=1$, Bertucci et al. \cite{BertucciLasryLions2024b} propose a different method to derive this bound, which relies on the Poisson equation $-\D\g = \cd\d_0$ (possibly periodized if we are on the torus). One contribution of this paper is to provide a direct unified method in order to obtain this inequality, as will be clear in the last section. 

Another issue concerns the link with a microscopic model of interacting particles. When the mobility is constant, one can view this equation as a mean-field model for the system
\begin{equation*}
\frac{dX_t^i}{dt} = -\frac{1}{N}\sum_{j\ne i}\nab\g(X_t^i-X_t^j).
\end{equation*}
A rigorous proof of the mean-field convergence for Coulombic interactions is presented in \cite{duerinckx2016mean, serfaty2020mean}, introducing a modulated energy method which requires sufficient regularity. For super-Coulombic interactions, obtaining strong solutions necessitates adding a viscous term and employing a modulated \emph{free} energy method, as detailed in \cite{BJW2019crm, BJW2019edp, BJW2020}. The regularity needed for these approaches is established in \cite{CDCRS2023}. However, when the mobility depends on the density, it is unclear which particle system is appropriate for studying the validity of such equations.

An important difficulty of these systems is the lack of a comparison principle in general. In particular, we cannot use viscosity solutions, and no uniqueness result is known in the super-Coulombic case $(d-2)_+<s<d$. In the Coulomb case $s=d-2$ however, uniqueness of bounded solutions is obtained in \cite{serfatyVazquez2014}. Furthermore, when $d=1$ or if one considers radial solutions, we can integrate in space and map the equation to another one, for which there is a comparison principle.\footnote{ This is a classical link between one-dimensional scalar conservation laws and Hamilton-Jacobi equations.} This strategy is that of \cite{BertucciLasryLions2024b, carrillo2022vortexsupermobility, carrillo2022fastsubmobility}. We finally note that the Riesz potential is displacement convex when $d=1$, which allows \cite{carrillo2015exponential} to exploit the dissipation term by getting a spectral gap lower bound in the spirit of a logarithmic Sobolev inequality, and thus prove exponential convergence towards the equilibrium. 

\paragraph{Density-dependent mobility.}

When the mobility is not constant anymore ($m\neq 1$), the qualitative behavior of solutions heavily depends on the regime studied. 

When $m\in [1,2)$, it is known at least from \cite{STAN2016infinitespeed} that one can build solutions with finite-speed propagation.

The so called ``fast diffusion'' regime $m\in(0,1)$ is very different, as we expect solutions to have infinite speed of propagation. However, this could only be proved in the specific case $d=1$, where one can argue with comparison principles by using an integrated problem \cite{STAN2014noteInfinitepropagation, STAN2016infinitespeed}. One contribution of this article is to prove an infinite speed of propagation result in any dimension, for non necessarily radial solutions on the torus. 

In \cite{stanTesoVazquez2019existence}, the authors provide an existence result for any $m>0$, and super-Coulombic interactions $(d-2)_+<s<d$, starting from nonnegative measure initial data of finite mass. Their proof relies on successive compactness arguments and an $L^1-L^\infty$ smoothing effect obtained from Moser iterations, which we have already discussed. 

Concerning the Coulomb case $s=d-2$, we refer to \cite{carrillo2022fastsubmobility} for the fast diffusion case $m<1$ and to \cite{carrillo2022vortexsupermobility} for the case $m>1$ (where compactly supported solutions are obtained). In both cases, the authors rely on an integrated problem by considering radial solutions, thus making use of the link between the scalar conservation law and its Hamilton-Jacobi equation. Again, a comparison principle holds  at the level of the Hamilton-Jacobi equation, which allows to use viscosity solutions. In contrast, the qualitative properties of non radial solutions is quite an open problem. Most of what shall be presented in this article extends to the Coulomb case $s=d-2$. However, strong compactness estimates is a delicate question here, so that the main results only concern the super-Coulombic case.

Finally, there is no existence theory for the clogged porous medium model $m<0$. Nevertheless, \cite{stan2015transformations} constructs self-similar solutions in this case. These solutions may vanish in finite time if $\frac{-md}{s-d+2}>1$. One contribution of this article is to develop an existence theory for any $m<0$. 

We finally note that other kinds of inhomogeneities can be considered. An example includes a non-linearity in Darcy's law in the form
\begin{equation}\label{eq:nonlinearpressure}
    \p_t \mu -\div (|\mu| \nab\g\ast(|\mu|^{m-2}\mu) ) =0.
\end{equation}
This model has been studied in \cite{biler2015nonlocal} for $m>1$, where explicit self-similar profiles and weak solutions are obtained. In the case of positive solutions, finite speed of propagation is obtained in \cite{Imbert14} and, for $m>2$, uniqueness and H\"older regularity of solutions is proved in \cite{DaoDiaz2025}. It is also possible to mix such kinds of inhomogeneities with a nonlinear mobility, which leads to the following model:
\begin{equation*}
    \p_t \mu -\div (|\mu|^{m_1} \nab\g\ast(|\mu|^{m_2-1}\mu) ) =0.
\end{equation*}
In \cite{nguyen2018porous}, the authors study this quite general equation in the regime $m_1, m_2>0$, and provide a universal bound in the form of\footnote{Obviously, the constants $\d,\zeta$ will not be the same.} \eqref{eq:prototypeLinftybound} when $m_1+m_2>1$. 

\bigbreak

We end this introduction by noting an important difference between the nonlinear mobility regime and the nonlinear pressure law regime described by \eqref{eq:nonlinearpressure}. Indeed, equation \eqref{eq:intro} can be viewed as a transport–diffusion equation with effective velocity $-\mu^{m-1}\nabla \g\ast\mu$. Consequently, if the solution is steep near some point, the velocity may differ between the top and bottom of this steepness by an amount of order $1$. This can lead to the formation of shocks, as expected in the Coulomb case $s=d-2$. When $s>d-2$, it is unclear whether diffusion dominates this mechanism; the question of higher regularity remains open, although an argument à la De Giorgi might apply.

\subsection{Main results.}

Let us now state the main results proven in this article. The first result concerns the clogged diffusion model:
\begin{equation}\label{eq:clogged}
\begin{cases}
    \p_t\mu-\div(\mu^{-m}\nab\g\ast\mu) = 0,\quad m>0,\\
    \mu\vert_{t=0} = \mu_0.
\end{cases}
\end{equation} 
The second result concerns the fast diffusion case:
\begin{equation}\label{eq:FD}
\begin{cases}
    \p_t\mu-\div(\mu^m\nab\g\ast\mu) = 0,\quad m\in (0,1),\\
    \mu\vert_{t=0} = \mu_0.
\end{cases}
\end{equation}
Let us define the usual notion of weak solution for these problems. 

\begin{mydef}
    Let $d\ge 1$, $(d-2)_+<s<d$, $m\in \R$, and $\mu_0\in \mathcal{M}_+(\T^d)$. A \emph{weak solution} to \eqref{eq:intro} is a distribution $\mu$ on $[0,\infty)\times\T^d$ such that $\nab\g\ast\mu\mu^{m}\in L^1_{loc}([0,\infty)\times\T^d)$, $\mu\in L^\infty_{loc}((0,\infty), L^1(\T^d))$, and, for all $\phi\in C^{0,1}([0,\infty)\times\T^d)$ and $T>0$, 
    \begin{multline*}
        \int_0^T\int_{\T^d}\p_t\phi(t,x) \mu(t,x)  dxdt + \int_{\T^d}\phi(T,x)\mu(T,x)dx - \int_{\T^d}\phi(0,x)d\mu_0(x) \\
        + \int_0^T\int_{\T^d}\nab\phi(t,x)\cdot \nab\g\ast\mu(t,x) \mu^{m}(t,x) dxdt = 0.
    \end{multline*}
\end{mydef}
The first main result of this paper is:
\begin{thm}[Clogged diffusion]\label{thm:clogged}
    Let $d\ge 1$, $(d-2)_+ < s<d$, and $m>0$. Define
    \begin{equation*}
        \ga := \frac{md}{s-d+2}.
    \end{equation*}
    If $\ga \ge 1$, consider $\mu_0 \in L^{p}(\T^d)\cap \dot H^\frac{s-d}{2}(\T^d)$, for some $ p>\ga$. Otherwise, we can take $\mu_0\in \mathcal{M}_+(\T^d)\cap \dot H^\frac{s-d}{2}(\T^d)$. Assume finally $\mu_0\ge 0$. Then, there exists a weak solution $\mu$ to \eqref{eq:clogged} satisfying for all $0<\tau<t$,
    \begin{enumerate}[label=$\left(\roman*\right)$]
        \item  (decrease of the $L^1$ norm) $$\displaystyle\|\mu(t)\|_{L^1} \le \|\mu(\tau)\|_{L^1},$$
        \item (conservation of mass) $$\displaystyle\int_{\T^d}\mu(t,x)dx = \int_{\T^d} d\mu_0 =: \bmu,$$ 
        \item (weak maximum principle) $t\mapsto \mathrm{ess\ inf\ } \mu(t)$ is nondecreasing and $t\mapsto \mathrm{ess\ sup\ }\mu(t)$ is nonincreasing,
        \item (dissipation of the $L^p$ norms)\footnote{We use the convention that $\displaystyle\frac{x^0}{0} = \log x$ in the case $p=m+1$.} for all $p\in(1, \infty)$, \begin{equation*}
        \displaystyle\|\mu(t)\|_{L^p}^p + \frac{p(p-1)}{p-m-1}\cd \int_\tau^t \int_{\T^d} \mu^{p-1-m}|\nab|^{s-d+2}\mu  dxd\tau' \le \|\mu(\tau)\|_{L^p}^p,
        \end{equation*}
        
        \item  (energy dissipation) \begin{equation*}
            \displaystyle\int_{\T^d} \g\ast\mu(t)\mu(t)dx + \int_\tau^t\int_{\T^d}|\nab\g\ast\mu|^2 \mu^{-m} dxd\tau' \le \int_{\T^d}\g\ast\mu(\tau)\mu(\tau)dx,
        \end{equation*}
        
        \item (infinite-speed propagation) $$\mu(t,x)\ge \Phi(t),\quad \text{for a.e. } t>0 \text{ and } x\in \T^d, $$ where $\Phi$ is the unique solution to
    \begin{equation}\label{thm:ODE}
    \begin{cases}
        \displaystyle\dot \Phi = C_{d,s} \frac{\bmu-\Phi}{\Phi^m},\\
        \Phi(0)=0.
    \end{cases}
    \end{equation}
    \item (instantaneous regularization) $$\displaystyle\|\mu(t)\|_{L^\infty} \le C_{s,p,d} t^{-\d_p}\|\mu_0\|_{L^p}^{\zeta_p} + \omega( \bmu ),$$ where
    \begin{equation*}
        \displaystyle \d_p := \frac{d}{p(s-d+2)-md},\quad \zeta_p := \frac{(|p-m|+m)(s-d+2)}{(|p-m|+m)(s-d+2)-md},
    \end{equation*}
    and $\omega:\R_+\to\R_+$ is some continuous increasing map depending on $s,d,p$, such that $\omega(0)=0$.
    \end{enumerate}
    
\end{thm}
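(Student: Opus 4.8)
The plan is to obtain $\mu$ as the $\ep\to 0$ limit of solutions $\mu_\ep$ to a non-degenerate regularization of \eqref{eq:clogged}, and to establish each of the eight properties first for $\mu_\ep$ with $\ep$-independent constants, in the spirit of the existence scheme of \cite{stanTesoVazquez2019existence}. Concretely, I would replace the mobility $r\mapsto r^{-m}$ by a smooth, bounded, strictly positive truncation $\sigma_\ep$, mollify $\g$ into a smooth $\g_\ep$, mollify the datum, and, if needed to invoke classical parabolic theory, add a vanishing viscosity $\ep\Delta\mu_\ep$. Since $|\nab|^{d-s}\g=\cd(\d_0-1)$ implies the identity $-\Delta(\g\ast\mu)=\cd|\nab|^{s-d+2}\mu$ (the mean being annihilated by $|\nab|^{s-d+2}$ since $s-d+2>0$), the leading part of the regularized equation is a genuine fractional diffusion $-\cd\,\sigma_\ep(\mu_\ep)\,|\nab|^{s-d+2}\mu_\ep$ plus a transport term with bounded velocity, which produces global smooth positive solutions on which all the formal manipulations below are licit.

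For the a priori bounds: testing with $\phi\equiv1$ gives (i)--(ii); testing with $p\mu_\ep^{p-1}$ and using the chain-rule identity $\mu^{-m}\nab(\mu^{p-1})=\tfrac{p-1}{p-1-m}\nab(\mu^{p-1-m})$ followed by the identity above produces (iv), as an equality before the limit; testing with $\g_\ep\ast\mu_\ep$ gives the energy balance (v). The maximum principle (iii) and the lower bound (vi) are the crux: at a spatial extremum $x_0$ of $\mu_\ep(t,\cdot)$ the transport term vanishes, so (up to the viscous term, which has the favorable sign) $\p_t\mu_\ep(t,x_0)=-\cd\,\sigma_\ep(\mu_\ep(t,x_0))\,|\nab|^{s-d+2}\mu_\ep(t,x_0)$; evaluating the singular-integral representation of $|\nab|^{s-d+2}$ on $\T^d$ at a minimum and bounding its (strictly positive) kernel below by its value at the diameter of the torus gives
\[
\p_t\big(\mathrm{ess\,inf}\,\mu_\ep(t)\big)\ \ge\ C_{d,s}\,\frac{\bmu-\mathrm{ess\,inf}\,\mu_\ep(t)}{\big(\mathrm{ess\,inf}\,\mu_\ep(t)\big)^{m}},
\]
and symmetrically the nonincrease of $\mathrm{ess\,sup}\,\mu_\ep(t)$; comparison with the scalar ODE \eqref{thm:ODE} — which is separable, hence uniquely solvable despite the singularity at $0$, with $\Phi(t)\sim t^{1/(m+1)}$ near the origin and $\Phi(t)\to\bmu$ as $t\to\infty$ — then yields (vi). Property (vii) comes from a Moser iteration à la Caffarelli--Vasseur: feeding the $L^p$ dissipation (iv) into a Stroock--Varopoulos inequality $\int f^{q-1}|\nab|^{s-d+2}f\gtrsim\|f^{q/2}\|_{\dot H^{(s-d+2)/2}}^2$ — whose two sign regimes account for the $|p-m|$ in $\zeta_p$ — and then a Sobolev--Nash inequality on $\T^d$ gives a recursion of differential inequalities whose resolution produces the exponents $\d_p$ and $\zeta_p$; the additive $\omega(\bmu)$ records the lower-order contributions that the degeneracy of the mobility at large density prevents from being swept into the self-improving scale. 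The distinction between $\ga\ge1$ and $\ga<1$ is exactly the threshold at which this iteration can, or cannot, be launched from the mere $L^1$ bound $\|\mu_\ep\|_{L^1}=\bmu$, forcing the hypothesis $\mu_0\in L^p$ with $p>\ga$ when $\ga\ge1$.

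Finally, to pass to the limit: the $\ep$-uniform $L^p$ and energy estimates, the lower bound $\mu_\ep(t)\ge\Phi_\ep(t)>0$ for $t>0$ — which makes $\sigma_\ep(\mu_\ep(t))$, and hence the mobility, locally bounded away from $t=0$ — and the equicontinuity in time read off from the equation yield, via Aubin--Lions, strong $L^1_{loc}((0,\infty)\times\T^d)$ convergence of $\mu_\ep$; this suffices to pass to the limit in the nonlinear flux $\sigma_\ep(\mu_\ep)\nab\g_\ep\ast\mu_\ep$ and in the dissipation integrands, with weak lower semicontinuity turning the identities (iv)--(v) into the stated inequalities, while the pointwise bounds (iii), (vi), (vii) survive the a.e.\ limit. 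I expect the compactness step to be the main obstacle: controlling the nonlinear mobility term up to the initial time when the datum is only a measure (the range $\ga<1$), and ensuring the truncations do not spoil the fine balance in the fractional maximum principle — which is where the super-Coulombic condition $s>d-2$, guaranteeing that $|\nab|^{s-d+2}$ is a genuine fractional Laplacian with a strictly positive integral kernel, is essential.
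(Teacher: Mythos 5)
Your overall strategy---regularize, derive $\varepsilon$-uniform a priori estimates, and pass to the limit using compactness plus a weak time-continuity argument near $t=0$---matches the paper's. A couple of remarks on the setup: the paper's regularization is leaner than the one you propose; it keeps the mobility $|\mu_\varepsilon|^{-m}$ and the potential $\g$ unmodified, adds only the viscosity $\varepsilon\Delta\mu_\varepsilon$, and takes a smooth datum $\mu_{0,\varepsilon}\ge\varepsilon$. The weak maximum principle (iii) then preserves $\mu_\varepsilon\ge\varepsilon$ for all time, so the mobility is automatically nondegenerate and the identity $\Delta\g\ast\mu=-c_{d,s}|\nabla|^{s-d+2}\mu$ stays exact; mollifying $\g$, as you suggest, would interfere with the pointwise fractional inequalities of Lemma \ref{lem:minprinciple} and Lemma \ref{prop:pwD}. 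Also note that you derive (iii) by evaluating the equation along a curve of minimizers, but this uses the envelope identity $\frac{d}{dt}\min\mu_\varepsilon(t)=\partial_t\mu_\varepsilon(t,\bar x(t))$, which the paper proves as Lemma \ref{lem:mincurve} \emph{under the hypothesis that (iii) already holds}. The paper therefore first establishes (iii) independently by testing with $(\mu_\varepsilon-c)_-$ and $(\mu_\varepsilon-c)_+$ (for $c$ below $\inf\mu_{0,\varepsilon}$, resp.\ above $\sup\mu_{0,\varepsilon}$) and exploiting the positivity of the Gagliardo kernel, and only then invokes Lemma \ref{lem:mincurve} for (vi). Your route can be repaired---$t\mapsto\min\mu_\varepsilon(t)$ is Lipschitz for $C^1$ solutions, which gives the a.e.\ differentiability needed without assuming monotonicity---but as written there is a circularity to flag.

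The genuine gap is in (vii) for $\gamma<1$. In that regime $\mu_0$ is only a finite measure, so no $L^p$ bound with $p>1$ is available at $t=0$, and the Stroock--Varopoulos dissipation inequality that feeds the Moser recursion degenerates at $p=1$: the prefactor $\frac{p(p-1)}{(p-m)^2}$ of the dissipation term vanishes, so the iteration you describe literally cannot be launched from the conserved $L^1$ norm. The paper's resolution---and one of its stated contributions---is a \emph{direct}, non-iterative argument: evaluate the equation along a curve of maximizers, drop the transport term (which vanishes at an interior maximum) and the viscous term (favorable sign), and apply the Constantin--Vicol-type lower bound of Lemma \ref{prop:pwD} for $(-\Delta)^{(s-d+2)/2}$ to obtain the closed differential inequality
\[
\frac{d}{dt}\|\mu_\varepsilon(t)\|_{L^\infty}+C\,\|\mu_\varepsilon(t)\|_{L^\infty}^{\,1-m+\frac{s-d+2}{d}}\,\|\mu_\varepsilon(t)\|_{L^1}^{-\frac{s-d+2}{d}}\le 0,
\]
valid unless $\|\mu_\varepsilon(t)\|_{L^\infty}\lesssim\|\mu_\varepsilon(t)\|_{L^1}$; integrating this scalar ODE gives the stated bound with exponents $\delta_1,\zeta_1$, no iteration required. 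The iteration you propose is what the paper uses only for $\gamma\ge1$, starting from $L^{p_0}$ with $p_0>\gamma$, and your identification of $\gamma\ge1$ as the threshold at which $L^1$ is insufficient is correct as a heuristic; but for $\gamma<1$ the direct maximum-principle step you omitted is the argument, not the iteration.
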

\begin{remark}\label{rem:introLowerbound}
    The lower barrier given by \eqref{thm:ODE} implies that for almost every $t>0$ and $x\in \T^d$,
    \begin{equation*}
        \mu(t,x)\ge C\min\bigg((\bmu t)^\frac{1}{1+m}, \quad \bmu \left(1-e^{-C_{d,s}t}\right) \bigg),
    \end{equation*}
    for some $C>0$ depending on $s,d,m$. The scaling law $t^{\frac{1}{1+m}}$ is the same as in \cite{stan2015transformations}, for type I self-similar solutions (obtained in the range $\ga<1$). In contrast, when $\ga>1$, the authors present (type II) self-similar solutions that vanish in finite time due to mass escaping to infinity. However, since we are working within a bounded domain, this mass leakage is avoided.
\end{remark}
\begin{remark}
    Point $(vii)$ may be surprising. Indeed, the mobility is degenerate such that the velocity vanishes at points where $\mu = +\infty$. In particular, we could have imagined that solutions would remain unbounded. It is nevertheless enough to assume some integrability on the initial data.
\end{remark}
\begin{remark}
    On the Euclidean space $\R^d$, we would expect solutions to generically vanish in finite time for some large enough $m>0$. This has been proved for some self-similar solutions in \cite{stan2015transformations}. This is why we restrict ourselves to a confined domain. 
\end{remark}
\begin{remark}
    The restriction to finite energy initial datum $\dot H^\frac{s-d}{2}$ ensures weak continuity, as will be clear at the end of Section 3. This assumption can be refined. 
\end{remark}

\begin{remark}
    We could have considered the model
    \begin{equation*}
        \p_t \mu - \div(|\mu|^{-m}\nab\g\ast u) = 0,
    \end{equation*}
    for a signed initial data $\mu_0$, which we leave as a possible extension of our work.
\end{remark}

The second main result of this paper is:
\begin{thm}[Fast diffusion]\label{thm:FD}
    Let $d\ge 1$, $(d-2)_+ < s<d$, and $m\in(0,1)$. Consider $\mu_0\in \mathcal{M}_+(\T^d)$. There exists a solution $\mu$ to \eqref{eq:FD} satisfying for all $0<\tau<t$,
    \begin{enumerate}[label=$\left(\roman*\right)$]
    \item (decrease of the $L^1$ norm) $$\displaystyle\|\mu(t)\|_{L^1} \le \|\mu(\tau)\|_{L^1},$$ 
        \item (conservation of mass) $$\displaystyle\int_{\T^d}\mu(t)dx = \int_{\T^d}d\mu_0 =: \bmu,$$
        \item  (weak maximum principle) $t\mapsto \mathrm{ess\ inf\ } \mu(t)$ is nondecreasing and $t\mapsto \mathrm{ess\ sup\ } \mu(t)$ is nonincreasing,
        \item (dissipation of the $L^p$ norms) for all $p\in(1, \infty)$,    
        \begin{equation*}
        \displaystyle\|\mu(t)\|_{L^p}^p + \frac{p(p-1)}{p+m-1}\cd \int_\tau^t \int_{\T^d} \mu^{p-1+m}|\nab|^{s-d+2} \mu\ dxd\tau' \le \|\mu(\tau)\|_{L^p}^p,
        \end{equation*}
        
        \item (energy dissipation) 
        \begin{equation*}
            \displaystyle\int_{\T^d} \g\ast\mu(t)\mu(t)dx + \int_\tau^t\int_{\T^d}|\nab\g\ast\mu|^2 \mu^{m}\ dxd\tau' \le \int_{\T^d}\g\ast\mu(\tau)\mu(\tau)dx,
        \end{equation*}
         
        \item (infinite-speed propagation) 
        $$\mu(t,x)\ge \Phi(t),\quad \text{for a.e. } t>0 \text{ and } x\in \T^d,$$ where $\Phi$ is the unique \emph{positive} solution to
    \begin{equation}\label{thmFD:ODE}
    \begin{cases}
        \displaystyle\dot \Phi = C_{d,s} \Phi^m \left(\bmu-\Phi\right),\\
        \Phi(0)=0.
    \end{cases}
    \end{equation}
    \item (instantaneous regularization) $$\|\mu(t)\|_{L^\infty} \le C_{s,d}\min(1,t)^{-\d}\bmu,$$
    where $\displaystyle \d := \frac{d}{s-d+2 - md}.$
    \end{enumerate}
\end{thm}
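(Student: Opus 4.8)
The argument should run parallel to that of Theorem~\ref{thm:clogged}; I will stress the structure and the features specific to $m\in(0,1)$. The plan is to regularize, read off $\ep$-uniform estimates from the maximum principle for $|\nab|^{s-d+2}$, and pass to the limit by compactness. First I would mollify $\mu_0$ into $\mu_0^\ep\in C^\infty(\T^d)$ with $\ep\le\mu_0^\ep$ and $\int_{\T^d}\mu_0^\ep=\bmu$, and solve the viscous, non-degenerate approximation
\begin{equation*}
\p_t\mu^\ep-\div\big((\mu^\ep)^m\nab\g\ast\mu^\ep\big)=\ep\D\mu^\ep,\qquad \mu^\ep\vert_{t=0}=\mu_0^\ep .
\end{equation*}
Since the datum is bounded above and below, a short-time fixed point together with the a priori bound $\ep\le\mu^\ep\le\|\mu_0^\ep\|_{L^\infty}$ (comparison with constants) yields a unique global smooth positive solution $\mu^\ep$.

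Next I would derive the $\ep$-uniform estimates, all built on the identity $-\D(\g\ast\mu^\ep)=\cd\,|\nab|^{s-d+2}\mu^\ep$, immediate from \eqref{eq:defg} on the Fourier side (note $0<s-d+2<2$ for $(d-2)_+<s<d$). Integrating the equation gives conservation of mass, hence (i)--(ii); testing against $p(\mu^\ep)^{p-1}$ and using the Stroock--Varopoulos inequality $\int_{\T^d}(\mu^\ep)^{p-1+m}|\nab|^{s-d+2}\mu^\ep\ge0$ (valid since $p-1+m>0$, which also removes the logarithmic borderline present in the clogged case) together with the non-negativity of the viscous dissipation gives (iv); testing against $\g\ast\mu^\ep$ gives (v). For (iii) and (vi) I would localize at spatial extrema: $\psi(t):=\min_x\mu^\ep(t,x)$ is Lipschitz, and at a minimizing point $x_t$ one has $\nab\mu^\ep(t,x_t)=0$, $\D\mu^\ep(t,x_t)\ge0$; using the singular-integral representation of $|\nab|^{s-d+2}$ on $\T^d$ — whose periodized kernel is bounded below by a positive constant — and $\min_x\mu^\ep(t)\le\bmu$, this gives for a.e.\ $t$
\begin{equation*}
\dot\psi(t)\ \ge\ \big(\mu^\ep(t,x_t)\big)^m\big(-\cd\,|\nab|^{s-d+2}\mu^\ep(t,x_t)\big)\ \ge\ C_{d,s}\,\psi(t)^m\big(\bmu-\psi(t)\big),
\end{equation*}
while the symmetric computation at a maximum shows $t\mapsto\max_x\mu^\ep(t,x)$ is nonincreasing. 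Comparing the differential inequality for $\psi$ with \eqref{thmFD:ODE} started from $\psi(0)=\min_x\mu_0^\ep>0$ (where the ODE is locally Lipschitz) yields $\mu^\ep\ge\Phi_\ep\ge\Phi$, $\Phi$ the positive solution of \eqref{thmFD:ODE}, which will give (vi). Finally, Moser iteration from (iv) and the fractional Gagliardo--Nirenberg--Sobolev inequality for $|\nab|^{(s-d+2)/2}$, as in \cite{CaffarelliVazquez11a,stanTesoVazquez2019existence}, yields the $\ep$-uniform smoothing (vii) from the mere $L^1$ bound on $\mu_0^\ep$; the truncation $\min(1,t)$ reflects that mass conservation and (vi) (which force $\mu^\ep\to\bmu$ uniformly as $t\to\infty$) give $\|\mu^\ep(t)\|_{L^\infty}\le C\bmu$ for $t\gtrsim1$.

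Then I would pass to the limit. Estimate (vii) bounds $\{\mu^\ep\}$ in $L^\infty_{loc}((0,\infty)\times\T^d)$ and (iv) bounds $\{(\mu^\ep)^{(p-1+m)/2}\}$ in $L^2_{loc}((0,\infty);\dot H^{(s-d+2)/2}(\T^d))$; the equation then controls $\p_t\mu^\ep$ in a negative-order Sobolev space, so Aubin--Lions gives a subsequence converging strongly in $L^q_{loc}$ and a.e.\ to some $\mu$. Since convolution with $\g$ is smoothing, $\nab\g\ast\mu^\ep\to\nab\g\ast\mu$, and together with the strong convergence and the uniform $L^\infty$ bound away from $t=0$, the flux $(\mu^\ep)^m\nab\g\ast\mu^\ep$ converges in $L^1_{loc}((0,\infty)\times\T^d)$, so $\mu$ solves \eqref{eq:FD} weakly on $(0,\infty)\times\T^d$; the initial datum is attached via the equicontinuity of $t\mapsto\mu^\ep(t)$ near $t=0$ in a weak norm (read off from the equation and the $L^1$ bound on the flux) together with $\mu_0^\ep\rightharpoonup\mu_0$, the requisite integrability of the flux down to $t=0$ coming from the smoothing and energy estimates as in \cite{stanTesoVazquez2019existence}. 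Crucially, the barrier (vi) passes to the limit ($\mu^\ep\ge\Phi_\ep\to\Phi$) and forces $\mu(t,\cdot)\ge\Phi(t)>0$ for every $t>0$, so on $(0,\infty)\times\T^d$ the limiting equation is uniformly parabolic with bounded coefficients; a parabolic bootstrap makes $\mu$ classical there, whence (i)--(iii) hold for $\mu$ by the same extremal-point computations, (iv)--(v) by weak lower semicontinuity of the dissipations, and (vii) by the uniform bound.

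The hard part will be the lower-barrier step (vi): I must justify the differential inequality for the Lipschitz map $t\mapsto\min_x\mu^\ep$ (via Dini derivatives and an envelope argument), quantify the torus kernel lower bound so as to produce the $(\bmu-\psi)$ factor, and handle the failure of Osgood's condition for \eqref{thmFD:ODE} at $\Phi=0$ — so that the \emph{positive} solution is recovered only in the limit $\min_x\mu_0^\ep\to0$, not at fixed $\ep$. A secondary, more technical difficulty, typical of the range $(d-2)_+<s<d$, is securing enough strong compactness to identify the nonlinear flux and control its integrability near $t=0$; this is where the $\ep$-uniform $L^\infty$ smoothing does the decisive work.
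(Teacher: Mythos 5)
Your overall plan — viscous regularization, $\ep$-uniform estimates read off the periodized singular-integral form of $|\nab|^{s-d+2}$, and compactness — is the paper's plan, and your treatment of (iii) and (vi) by evaluating the equation at a curve of minimizers (Lipschitz/Dini-derivative envelope, a minimum principle for the fractional Laplacian, and the Osgood non-uniqueness of \eqref{thmFD:ODE}, with the positive branch selected by letting $\ep\to 0$) is exactly what the paper does with Lemma~\ref{lem:minprinciple} and Lemma~\ref{lem:mincurve}.

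There is, however, a genuine gap in your limiting step. You claim that the barrier $\mu(t,\cdot)\ge\Phi(t)>0$ for $t>0$ makes the limit equation ``uniformly parabolic with bounded coefficients,'' so that a bootstrap renders $\mu$ classical on $(0,\infty)\times\T^d$ and you may re-run the extremal-point computations on $\mu$ itself. This is not correct: once $\ep\to 0$ there is no local second-order operator left. Expanding the divergence, the equation is
\begin{equation*}
\p_t\mu \;=\; m\,\mu^{m-1}\,\nab\mu\cdot\nab\g\ast\mu \;-\; \cd\,\mu^{m}\,(-\D)^{\frac{s-d+2}{2}}\mu,
\end{equation*}
a nonlocal transport--diffusion equation of order $s-d+2<2$, and the paper explicitly leaves its higher regularity open (``the question of higher regularity remains open, although an argument \`a la De Giorgi might apply''). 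A lower bound on $\mu$ does not make this classically parabolic, and the pointwise extremal-point argument cannot be applied to the a.e.-defined limit. What is actually done is to prove (i)--(vii) for $\mu_\ep$ and pass each estimate to the limit: (i)--(ii) and (vi)--(vii) by a.e.\ and $L^1$ convergence, (iii) in the $\mathrm{ess\ inf}/\mathrm{ess\ sup}$ sense, and (iv)--(v) by weak lower semicontinuity after rewriting the dissipation via Stroock--Varopoulos.

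On (vii) you also take a genuinely different route: you invoke Moser iteration as in \cite{CaffarelliVazquez11a,stanTesoVazquez2019existence}. This does give the bound, but it is precisely the machinery the paper set out to replace. The paper evaluates the equation along a curve of maxima and applies the nonlinear maximum principle of Lemma~\ref{prop:pwD}, producing in one step the dichotomy
\begin{equation*}
\frac{d}{dt}\|\mu_\ep(t)\|_{L^\infty} + C\,\|\mu_\ep(t)\|_{L^\infty}^{\,1+m+\frac{s-d+2}{d}}\,\|\mu_\ep(t)\|_{L^1}^{-\frac{s-d+2}{d}} \le 0
\quad\text{or}\quad
\|\mu_\ep(t)\|_{L^\infty}\le C'\|\mu_\ep(t)\|_{L^1},
\end{equation*}
from which the smoothing estimate with the $\min(1,t)^{-\d}$ factor follows by ODE comparison. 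This single-step maximum-principle derivation is the ``direct unified method'' advertised in the introduction; your Moser argument proves the estimate but misses that contribution.
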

\begin{remark}
    The lower barrier \eqref{thmFD:ODE} implies that for almost every $t>0$ and $x\in \T^d$,
    \begin{equation*}
        \mu(t,x)\ge C\min\bigg( (\bmu t)^\frac{1}{1-m}, \quad \bmu \left(1-e^{-C_{d,s}t}\right) \bigg),
    \end{equation*}
    for some $C>0$ depending on $s,d,m$. The scaling law $t^{\frac{1}{1-m}}$ is again the same as in \cite{stan2015transformations}.
\end{remark}
\begin{remark}
    As already mentioned, \cite{stanTesoVazquez2019existence} constructed weak solutions to this equation. The main novelty here is the lower barrier and the straightforward proof of the $L^\infty$ regularization result.
\end{remark}

\subsection{Acknowledgments.}

The author thanks the anonymous reviewers for their careful reading of the first draft of this manuscript. He also thanks Laure Saint-Raymond and Sylvia Serfaty for inspiring and insightful discussions, and acknowledges support from the Fondation CFM, through the Jean-Pierre Aguilar fellowship.

\section{Key estimates.}

In this section, we provide some key estimates for the fractional Laplacian which hold without assuming regularity of the function. These are maximum/minimum nonlinear principles for fractional diffusions.

\begin{lemma}\label{lem:minprinciple}
    Let $\al\in (0,2)$ and $d\geq 1$. Consider a smooth bounded map $\mu:\T^d\to \R$ and a point $\bar x$ where $\mu$ reaches its minimum. Then,
    \begin{equation*}
        (-\D)^{\frac{\al}{2}} \mu(\bar x) \le C\left( \mu(\bar x)-\bmu\right),
    \end{equation*}
    where $\displaystyle \bmu := \int_{\T^d} \mu dx$ and $C>0$ depends only on $d$ and $\al$.
\end{lemma}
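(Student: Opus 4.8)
The plan is to represent the fractional Laplacian on the torus by its singular-integral form with the periodized kernel, and then exploit two facts at the minimum point: the sign of the increments of $\mu$, and a uniform positive lower bound on the kernel coming from the compactness of $\T^d$. Viewing $\mu$ as a $\Z^d$-periodic function on $\R^d$ and folding the standard Euclidean representation over the fundamental domain $[-\hal,\hal]^d$, one has
\[
(-\D)^{\al/2}\mu(\bar x) \;=\; \int_{\T^d}\bigl(\mu(\bar x)-\mu(\bar x+z)\bigr)\,K_\al(z)\,dz,\qquad
K_\al(z):=c_{d,\al}\sum_{k\in\Z^d}\frac{1}{|z+k|^{d+\al}},
\]
with $c_{d,\al}>0$ the usual normalizing constant. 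First I would check that this integral converges absolutely at $\bar x$ (so that no principal value is needed and the folding above is legitimate): since $\bar x$ is a minimum of the smooth function $\mu$ we have $\nab\mu(\bar x)=0$, hence $|\mu(\bar x)-\mu(\bar x+z)|\lesssim|z|^2$ near the origin, while $K_\al(z)\sim c_{d,\al}|z|^{-d-\al}$ there; the product is $O(|z|^{2-\al})$, integrable exactly because $\al<2$, and away from the origin both $K_\al$ and $\mu$ are bounded.

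Next come the two structural observations. (a) At the minimum, $\mu(\bar x)-\mu(\bar x+z)\le 0$ for every $z\in\T^d$. (b) On the fundamental domain $|z|\le\sqrt d/2$, so keeping only the $k=0$ term of the sum gives $K_\al(z)\ge c_{d,\al}|z|^{-d-\al}\ge c_{d,\al}(\sqrt d/2)^{-d-\al}=:c_0>0$, uniformly on $\T^d$. Multiplying the nonpositive quantity in (a) by the larger factor in (b) can only decrease it, so pointwise $\bigl(\mu(\bar x)-\mu(\bar x+z)\bigr)K_\al(z)\le c_0\bigl(\mu(\bar x)-\mu(\bar x+z)\bigr)$. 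Integrating over $\T^d$ and using $\int_{\T^d}dz=1$,
\[
(-\D)^{\al/2}\mu(\bar x)\;\le\;c_0\int_{\T^d}\bigl(\mu(\bar x)-\mu(\bar x+z)\bigr)dz\;=\;c_0\Bigl(\mu(\bar x)-\int_{\T^d}\mu\,dz\Bigr)\;=\;c_0\,\bigl(\mu(\bar x)-\bmu\bigr),
\]
which is the assertion with $C=c_0$ depending only on $d$ and $\al$.

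There is no essential obstacle here; the only points requiring a word of care are the justification of the folded kernel representation and the absolute convergence near the singularity, which is precisely where the hypothesis $\al<2$ enters. It is worth stressing that the gain over the trivial bound $(-\D)^{\al/2}\mu(\bar x)\le 0$ — namely the quantitative control by $\mu(\bar x)-\bmu$ — relies on the compactness of the domain through the uniform lower bound $K_\al\ge c_0>0$; on $\R^d$ the kernel decays and one would only recover a bound in terms of an average of $\mu$ over a fixed-radius ball. The same computation at a maximum point yields the reverse inequality $(-\D)^{\al/2}\mu(\bar x)\ge c_0\bigl(\mu(\bar x)-\bmu\bigr)$, which is the form needed later for the weak maximum principle and for the lower barriers.
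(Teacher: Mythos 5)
Your proof is correct and follows essentially the same route as the paper: periodized kernel representation, lower bound on the kernel from the compactness of $\T^d$ (keeping only the $k=0$ contribution), and the sign of $\mu(\bar x)-\mu(y)$ at the minimum. The paper drops the $k\neq 0$ terms first and then bounds the remaining $|\bar x - y|^{-(d+\al)}$ from below, whereas you fold everything into a single kernel lower bound $K_\al(z)\ge c_0$, but this is only a cosmetic reorganization of the identical argument.
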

\begin{remark}
    Note that the constant $C$ above a priori depends on the size of the domain, as the following proof shows.
\end{remark}
\begin{proof}
    Consider the periodic rewriting of the fractional Laplacian on the torus (see e.g. \cite[Proposition 2.2]{cordoba2004maximum})\footnote{This reference considers the case $d=2$, but this formula clearly extends to general dimensions.}. Since $\bar x$ is a point of minimum, we have $\mu(\bar x) \le \mu$ on $\T^d$. Therefore, by Tonelli's theorem we can swap the sum and integral below.
    \begin{align*}
        &(-\D)^\frac{\al}{2} \mu(\bar x)  \\
        &= c_{d,\al} \sum_{k\in \Z^d} \int_{\T^d} \frac{\mu(\bar x)-\mu(y)}{|\bar x-y-k|^{d+\al}} dy  \\
        &= c_{d,\al} \int_{\T^d}  \frac{\mu(\bar x)-\mu(y)}{|\bar x-y|^{d+\al}} dy + c_{d,\al} \sum_{k\neq 0}  \int_{\T^d} \frac{\mu(\bar x)-\mu(y)}{|\bar x-y-k|^{d+\al}} dy   \\
        &\le c_{d,\al}   \int_{\T^d} \frac{\mu(\bar x)-\mu(y)}{|\bar x-y|^{d+\al}} dy,  
    \end{align*}     
    where we have selected the most singular contribution by dropping the nonpositive terms involving $|k|\neq 0$. Observing that\footnote{The distance considered here is that of the torus, defined by $|x| := \inf_{k\in \Z^d} |x-k|$.} $|\bar x-y| \le  2^{-1}\sqrt{d}$, we obtain since $|\T^d|= 1$
    \begin{align*}
        (-\D)^\frac{\al}{2} \mu(\bar x) &\le c_{d,\al} \int_{\T^d} (\mu(\bar x)-\mu(y))dy \\
        &= c_{d,\al}(\mu(\bar x) -\bmu ).
    \end{align*}
\end{proof}

We now provide the maximum principle counterpart, which is a generalization of the Constantin-Vicol maximum principle \cite{ConstantinVicol12}.

\begin{lemma}\label{prop:pwD}
    Let $d\geq 1$ and $\al \in (0,2)$. Let $\mu : \T^d\to \R$ be a smooth nonnegative function and $h:\R\to \R$ nondecreasing and convex such that $h(0)= 0$. Then, there exists constants $C,C'>0$ depending only on $\al$ and $d$ such that, given any point $\Bar{x}$ where $\mu$ reaches its maximum, either
        \begin{equation*}
        (-\D)^\frac{\al}{2} h(\mu)(\Bar{x}) \geq C h(\mu(\Bar{x})) \|\mu\|_{L^\infty}^{\frac{ \al}{d}} \|\mu \|_{L^1}^{-\frac{\al}{d}},
        \end{equation*}
    or
    \begin{equation*}
        \|\mu\|_{L^\infty} \le C'\|\mu\|_{L^1}.
    \end{equation*}
\end{lemma}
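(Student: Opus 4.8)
The plan is to estimate $(-\D)^{\al/2}h(\mu)(\bar x)$ from below at a maximum point $\bar x$ of $\mu$, by keeping only the most singular piece of the periodized kernel (the $k=0$ term), exactly as in the proof of Lemma~\ref{lem:minprinciple}, but now with the inequality reversed because $\bar x$ is a maximum. Since $h$ is nondecreasing and $\mu(\bar x)\ge\mu(y)$ for all $y$, every term $h(\mu(\bar x))-h(\mu(y))$ is nonnegative, so dropping the $k\ne 0$ contributions only decreases the quantity, giving
\begin{equation*}
    (-\D)^{\al/2}h(\mu)(\bar x)\;\ge\; c_{d,\al}\int_{\T^d}\frac{h(\mu(\bar x))-h(\mu(y))}{|\bar x-y|^{d+\al}}\,dy.
\end{equation*}

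Next I would localize to a ball $B_r(\bar x)$ of radius $r\le \tfrac12$ to be chosen, and on that ball bound $|\bar x-y|^{d+\al}\le r^{d+\al}$ from above so that
\begin{equation*}
    (-\D)^{\al/2}h(\mu)(\bar x)\;\ge\; \frac{c_{d,\al}}{r^{d+\al}}\int_{B_r(\bar x)}\bigl(h(\mu(\bar x))-h(\mu(y))\bigr)\,dy
    \;=\;\frac{c_{d,\al}}{r^{d+\al}}\Bigl(|B_r|\,h(\mu(\bar x))-\int_{B_r(\bar x)}h(\mu(y))\,dy\Bigr).
\end{equation*}
The term $\int_{B_r(\bar x)}h(\mu)$ must be controlled from above. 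Here I would use convexity of $h$ with $h(0)=0$, which gives $h(\la t)\le\la h(t)$ for $\la\in[0,1]$; applying this with $\la=\mu(y)/\mu(\bar x)\le 1$ and $t=\mu(\bar x)$ yields $h(\mu(y))\le \dfrac{h(\mu(\bar x))}{\mu(\bar x)}\,\mu(y)$, hence
\begin{equation*}
    \int_{B_r(\bar x)}h(\mu(y))\,dy\;\le\;\frac{h(\mu(\bar x))}{\mu(\bar x)}\int_{B_r(\bar x)}\mu(y)\,dy\;\le\;\frac{h(\mu(\bar x))}{\mu(\bar x)}\,\|\mu\|_{L^1}.
\end{equation*}
Plugging this in, with $|B_r|=\omega_d r^d$,
\begin{equation*}
    (-\D)^{\al/2}h(\mu)(\bar x)\;\ge\;\frac{c_{d,\al}\,h(\mu(\bar x))}{r^{d+\al}}\Bigl(\omega_d r^d-\frac{\|\mu\|_{L^1}}{\|\mu\|_{L^\infty}}\Bigr),
\end{equation*}
using $\mu(\bar x)=\|\mu\|_{L^\infty}$.

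Now comes the dichotomy. Set $r_\ast^d:=\dfrac{2}{\omega_d}\dfrac{\|\mu\|_{L^1}}{\|\mu\|_{L^\infty}}$, so that the bracket equals $\tfrac12\omega_d r^d$ when $r=r_\ast$. If $r_\ast\le\tfrac12$, this choice is admissible, and we get
\begin{equation*}
    (-\D)^{\al/2}h(\mu)(\bar x)\;\ge\;\frac{c_{d,\al}\omega_d}{2}\,h(\mu(\bar x))\,r_\ast^{-\al}
    \;=\;C\,h(\mu(\bar x))\,\Bigl(\frac{\|\mu\|_{L^\infty}}{\|\mu\|_{L^1}}\Bigr)^{\al/d},
\end{equation*}
which is the first alternative. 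If instead $r_\ast>\tfrac12$, then by definition of $r_\ast$ this is precisely the statement $\|\mu\|_{L^\infty}\le C'\|\mu\|_{L^1}$ with $C'=\tfrac{2}{\omega_d}2^d$, the second alternative. The only genuinely delicate point is the passage from $h$ evaluated at $\mu(y)$ to a linear expression in $\mu(y)$: the bound $h(\la t)\le\la h(t)$ for $\la\in[0,1]$ is where convexity and $h(0)=0$ are essential, and one should double-check it is used in the correct direction (it is: $h(\mu(\bar x))-h(\mu(y))\ge h(\mu(\bar x))\bigl(1-\mu(y)/\mu(\bar x)\bigr)$ is exactly what makes the bracket come out with the right sign). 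Everything else is the same kernel-truncation bookkeeping already seen in Lemma~\ref{lem:minprinciple}, together with the elementary choice of $r$.
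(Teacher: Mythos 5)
Your proof is correct and takes essentially the same approach as the paper's: drop the nonsingular periodization terms, use convexity of $h$ with $h(0)=0$ to control $h(\mu(y))$ linearly by $\mu(y)\,h(\|\mu\|_{L^\infty})/\|\mu\|_{L^\infty}$, pull in the mass $\|\mu\|_{L^1}$, and optimize a cutoff radius $r$ to obtain the dichotomy. The only variation is cosmetic: you restrict the integral to the ball $B_r(\bar x)$ and lower-bound the kernel there by $r^{-(d+\al)}$, whereas the paper restricts to the complement $B(\bar x,R)^c$ and estimates the kernel's tail and its sup there; both yield the same optimal radius $r_\ast\sim(\|\mu\|_{L^1}/\|\mu\|_{L^\infty})^{1/d}$, the same admissibility constraint on $r_\ast$, and hence the same two alternatives.
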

\begin{remark}
    Such a pointwise inequality for the fractional Laplacian has been derived in the context of Burgers and the surface quasi-geostrophic equation \cite{ConstantinVicol12}. Here, we provide an inequality that does not depend on antiderivatives of $\mu$. The extension to nonlinear (convex) $h:\R\to\R$ is given for free, and may be useful in some other equations, e.g. of the form
    \begin{equation*}
        \p_t u -\div (u^{m_1}\nab\g\ast u^{m_2})= 
        0,
    \end{equation*}
    $m_2>0$ taking into account some sort of inhomogeneities, as in \cite{biler2015nonlocal, nguyen2018porous}.
\end{remark}
\begin{proof}
    We have 
    \begin{equation*}
        (-\D)^\frac{\al}{2} h(\mu)(\Bar{x}) = c_{d,\al}\int_{\T^d} \sum_{k\in\Z^d}\frac{h(\mu(\Bar{x}))-h(\mu(y))}{|\Bar{x}-y-k|^{d+\al}} dy.
    \end{equation*}
    Since $h$ is nondecreasing and $\bar x$ is a point of maximum for $\mu$, the integrand above is always nonnegative. Moreover, the most singular contribution is given by $k=0$. Fixing $0<R<2^{-\hal}\sqrt{d}$ momentarily, one obtains
    \begin{align*}
        (-\D)^\frac{\al}{2} h(\mu)(\bar x) &\geq c_{d,\al} \int_{B(\bar x, R)^c} \frac{h(\mu(\Bar{x}))-h(\mu(y))}{|\Bar{x}-y|^{d+\al}} dy \\
        &= c_{d,\al} h(\mu(\bar x)) \int_{B(\bar x, R)^c} \frac{dy}{|\bar x-y|^{d+\al}} - c_{d,\al}  \int_{B(\bar x, R)^c} \frac{h(\mu(y))}{|\bar x-y|^{d+\al}} dy \\
        &= c_{d,\al}' h(\mu(\bar x))  \frac{1}{ R^\al} - c_{d,\al} \int_{B(\bar x, R)^c} \frac{h(\mu(y))}{|\bar x-y|^{d+\al}} dy.
    \end{align*}
    Now, since $h(0)=0$ and $h$ is convex, we have
    \begin{equation*}
        h(\mu(y)) \le \frac{\mu(y)}{\|\mu\|_{L^\infty}} h(\|\mu\|_{L^\infty}).
    \end{equation*}
    Thus,
    \begin{align*}
        (-\D)^\frac{\al}{2} h(\mu)(\bar x) &\ge  c_{d,\al}' h(\mu(\bar x))  \frac{ 1}{ R^{\al}} - c_{d,\al} \frac{h(\mu(\bar x))}{\|\mu\|_{L^\infty}} \int_{B(\bar x, R)^c} \frac{\mu(y)}{|\bar x-y|^{d+\al} } dy \\
        &\ge c_{d,\al}' h(\mu(\bar x))  \frac{ 1}{ R^{\al}} - c_{d,\al} \frac{h(\mu(\bar x))}{\|\mu\|_{L^\infty}} \|\mu\|_{L^1} \sup_{y\in B(\bar x, R)^c} \frac{1}{|\bar x-y|^{d+\al} } \\
        &=  h(\mu(\bar x)) \left( c_{d,\al}'\frac{ 1}{R^{\al}} -c_{d,\al} \frac{1}{\|\mu\|_{L^\infty}} \|\mu\|_{L^1} \frac{1}{R^{d+\al}} \right).
    \end{align*}
    Taking 
    \begin{equation*}
        R = \left(\frac{(d+\al) c_{d,\al} \|\mu\|_{L^1}}{c_{d,\al}' \|\mu \|_{L^\infty}}\right)^\frac{1}{d}
    \end{equation*}
    gives the result. However, this is possible only if 
    \begin{equation*}
        \left(\frac{(d+\al)c_{d,\al} \|\mu\|_{L^1}}{c_{d,\al}' \|\mu \|_{L^\infty}}\right)^\frac{1}{d} < \hal\sqrt{d}
    \end{equation*}
    in general. If this is not possible, then the other part of the proposition holds. 
\end{proof}

Finally, we will need the following lemma in order to obtain bounds in a straightforward manner.

\begin{lemma}\label{lem:mincurve}
    Let $T>0$ and $\mu\in C^1([0,T]\times \T^d)$ and consider a curve $\bar x: [0,T]\to \T^d$ such that, for all $t\in [0, T]$, $\bar x(t)$ is a minimum (resp. maximum) point of $\mu(t,\cdot)$. Assume moreover that $t\mapsto \min \mu(t,\cdot)$ is nondecreasing (resp. $t\mapsto \max \mu(t,\cdot)$ is nonincreasing). Then, $t\mapsto \mu(t,\bar x(t))$ is differentiable almost everywhere and, for almost every $t>0$,
    \begin{equation*}
        \frac{d}{dt}\mu(t,\bar x(t)) = \p_t \mu(t, \bar x(t)).
    \end{equation*}
\end{lemma}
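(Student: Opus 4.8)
The plan is to work directly with the scalar function $g(t) := \mu(t,\bar x(t)) = \min_{x\in\T^d}\mu(t,\cdot)$ (the maximum case being entirely symmetric, replacing minima by maxima and reversing the relevant inequalities). The strategy has two steps: first establish that $g$ is differentiable almost everywhere, then identify $g'$ pointwise. The crucial thing to recognize at the outset is that one should \emph{not} try to differentiate $t\mapsto\mu(t,\bar x(t))$ by the chain rule, since the selection $\bar x(\cdot)$ of minimizers is a priori neither unique nor continuous, let alone differentiable; instead one uses an envelope (Danskin-type) comparison against the \emph{frozen} point $\bar x(t_0)$, which bypasses any regularity question for $\bar x$.

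For the first step: since $\mu\in C^1([0,T]\times\T^d)$ and $[0,T]\times\T^d$ is compact, $\p_t\mu$ is bounded, say $\|\p_t\mu\|_\infty\le M$. Then $|\mu(t,x)-\mu(s,x)|\le M|t-s|$ for every $x$, and taking the infimum over $x$ gives $|g(t)-g(s)|\le M|t-s|$, so $g$ is Lipschitz and hence differentiable almost everywhere. (One may alternatively invoke the monotonicity hypothesis on $t\mapsto\min\mu(t,\cdot)$ together with Lebesgue's theorem on differentiability of monotone functions; either route suffices, and this is the only place the monotonicity assumption is needed.)

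For the second step, fix $t_0\in(0,T)$ a point at which $g$ is differentiable and set $\phi(t):=\mu(t,\bar x(t_0))$, so that $\phi\in C^1([0,T])$ with $\phi'(t_0)=\p_t\mu(t_0,\bar x(t_0))$. Because $\bar x(t)$ minimizes $\mu(t,\cdot)$, we have $g(t)=\mu(t,\bar x(t))\le\mu(t,\bar x(t_0))=\phi(t)$ for all $t$, with equality at $t=t_0$. Taking right difference quotients, $\frac{g(t)-g(t_0)}{t-t_0}\le\frac{\phi(t)-\phi(t_0)}{t-t_0}$ for $t>t_0$, and letting $t\to t_0^+$ yields $g'(t_0)\le\phi'(t_0)$. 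For $t<t_0$ the same inequality $g(t)\le\phi(t)$, divided by the negative quantity $t-t_0$, reverses to $\frac{g(t)-g(t_0)}{t-t_0}\ge\frac{\phi(t)-\phi(t_0)}{t-t_0}$, and letting $t\to t_0^-$ yields $g'(t_0)\ge\phi'(t_0)$. Hence $g'(t_0)=\phi'(t_0)=\p_t\mu(t_0,\bar x(t_0))$, which is the claim.

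There is no serious obstacle here: the argument is elementary once the envelope comparison is in place, and the only ingredients used are the compactness of $[0,T]\times\T^d$ (for boundedness of $\p_t\mu$, hence Lipschitz continuity of $g$) and the $C^1$ regularity of $\mu$ in $t$ (for the convergence of the difference quotients of $\phi$). If one wished to state the result under weaker hypotheses, one could drop $C^1$ and the monotonicity in favour of, say, $\mu$ Lipschitz in $t$ uniformly in $x$ and $t\mapsto\mu(t,x)$ differentiable for each $x$; the same proof goes through verbatim at points where $g$ is differentiable.
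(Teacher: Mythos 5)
Your proof is correct and uses essentially the same envelope (Danskin-type) comparison as the paper: freeze the minimizer $\bar x(t_0)$, compare $g(t)\le\mu(t,\bar x(t_0))$ with equality at $t_0$, and take one-sided difference quotients. The only (minor, and welcome) difference is that you establish almost-everywhere differentiability of $g$ via Lipschitz continuity rather than via the monotonicity hypothesis, which you correctly note makes the monotonicity assumption unnecessary for this lemma.
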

\begin{remark}
    We assume neither regularity nor uniqueness on the curve $\bar x$.
\end{remark}
\begin{proof}
    Let us assume that $t\mapsto \min \mu(t,\cdot)$ is nondecreasing. In particular, it is therefore differentiable almost everywhere. Moreover,
    \begin{align*}
        \mu(t+dt,x) = \mu(t,x) + dt \p_t\mu(t,x) + o(dt),
    \end{align*}
    uniformly in $x$. Applying this to $\bar x(t)$ gives for all $t>0$ 
    \begin{equation*}
        \mu(t+dt, \bar x(t+dt)) \le \mu(t+dt,\bar x(t)) = \mu(t,\bar x(t)) + dt\p_t\mu(t,\bar x(t)) + o(dt).
    \end{equation*}    
    Dividing by $dt$, we get
    \begin{align*}
    \begin{cases}
       \displaystyle   \frac{\mu(t + dt, \bar x(t+dt))- \mu(t,\bar x(t))}{dt}\le \p_t \mu(t,\bar x(t)) + o(1),  & dt>0, \\
      \displaystyle  \frac{\mu(t + dt, \bar x(t+dt))- \mu(t,\bar x(t))}{dt}\ge \p_t \mu(t,\bar x(t)) + o(1), & dt<0.
    \end{cases}
    \end{align*}
    Sending $dt\to 0$ at points $t>0$ where $t\mapsto \min \mu(t,\cdot)$ is differentiable, we obtain 
    \begin{equation*}
        \frac{d}{dt} \mu(t,\bar x(t)) \le \p_t\mu(t,\bar x(t)) \le \frac{d}{dt}\mu(t,\bar x(t)).
    \end{equation*}
    The case where $t\mapsto \max \mu(t,\cdot)$ is nonincreasing follows immediately from the same argument.
\end{proof}

\section{Clogged diffusions.}

In this section, we study clogged diffusions, i.e. models of the form
\begin{equation*}
\begin{cases}
    \p_t\mu - \div (\mu^{-m}\nab\g\ast\mu) =0, \\
    \mu\vert_{t=0} = \mu_0 \ge 0,
\end{cases}
\end{equation*}
where $m>0$. Of course, a major problem is the singular behavior of the mobility $\mu^{-m}$ where $\mu$ vanishes. The key ingredient is therefore to obtain an a priori estimate of the form of an integrable lower barrier on the solution. Moreover, such a mobility is degenerate at points where $\mu=+\infty$, so that unbounded initial data may not regularise into bounded solutions at positive times. 

\subsection{Existence theory.}

In order to construct solutions to \eqref{eq:clogged}, we will consider the following approximate problem
\begin{equation}\label{eq:approx}\begin{cases}
    \p_t\mu_\ep -\div (|\mu_\ep|^{-m}\nab\g\ast\mu_\ep ) = \ep\D\mu_\ep, \\
    \mu_\ep\vert_{t= 0} = \mu_{0,\ep},
\end{cases}
\end{equation}
for $\ep>0$, where $\mu_{0,\ep}$ is smooth, bounded and satisfies $\mu_{0,\ep} \ge \ep$ and $$\int_{\T^d} \g\ast \mu_{0,\ep}\mu_{0,\ep}dx\xrightarrow[\ep\to 0]{} \int_{\T^d} \g\ast\mu_0  d\mu_0.$$ Therefore, there exists a solution $\mu_{\ep}\in C^{1,1}([0,\infty)\times \T^d)$ to \eqref{eq:approx}. In what follows, all the a priori estimates are independent of $\ep\ge 0$. 

We will need the following inequality, which is very classical in the Euclidean setting $\R^d$. Nevertheless, we could not find a reference on the torus, so we include it below.

\begin{lemma}(Stroock-Varopoulos inequality)\label{lem:SVI}
    Let $d\ge 1$, $p\in \R$, $\gamma\in (0,2)$, and $\mu\in C^\infty(\T^d)$ positive. The following inequality holds.
    \begin{equation}
        \frac{1}{p-1}\int_{\T^d} \mu^{p-1}(-\D)^\frac{\gamma}{2}\mu \ dx \ge \frac{2}{p^2} \int_{\T^d} |(-\D)^\frac{\gamma}{4} \mu^\frac{p}{2}|^2 dx\label{ineq:SVI},
    \end{equation}
    with the convention $\displaystyle \frac{1}{p-1}\mu^{p-1} = \log\mu$ if $p=1$. 
\end{lemma}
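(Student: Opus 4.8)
The plan is to reduce \eqref{ineq:SVI} to an elementary pointwise inequality, via the periodized singular–integral representation of the fractional Laplacian on $\T^d$ already used in the proof of Lemma~\ref{lem:minprinciple}. Write $K(z)\coloneqq\sum_{k\in\Z^d}|z-k|^{-d-\gamma}$, a positive symmetric kernel, so that for smooth $\mu$
\[
(-\D)^{\gamma/2}\mu(x)=c_{d,\gamma}\,\mathrm{P.V.}\!\int_{\T^d}(\mu(x)-\mu(y))\,K(x-y)\,dy .
\]
Multiplying by $\mu(x)^{p-1}$ (by $\log\mu(x)$ if $p=1$), integrating in $x$ and symmetrizing in $x\leftrightarrow y$, one obtains
\[
\frac{1}{p-1}\int_{\T^d}\mu^{p-1}(-\D)^{\gamma/2}\mu\,dx=\frac{c_{d,\gamma}}{2}\int_{\T^d}\!\int_{\T^d}\frac{(\mu(x)^{p-1}-\mu(y)^{p-1})(\mu(x)-\mu(y))}{p-1}\,K(x-y)\,dx\,dy ,
\]
the quotient being read as $(\log\mu(x)-\log\mu(y))(\mu(x)-\mu(y))$ when $p=1$. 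Applying the same symmetrization to $f=\mu^{p/2}$ (smooth and positive, since $\mu$ is) gives the Gagliardo-type identity $c_{d,\gamma}\int\!\int(f(x)-f(y))^2K(x-y)\,dx\,dy=2\int_{\T^d}|(-\D)^{\gamma/4}f|^2\,dx$.

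The key point is then the algebraic inequality: for all $a,b>0$ and $p\neq 0$,
\[
\frac{1}{p-1}\big(a^{p-1}-b^{p-1}\big)(a-b)\ \ge\ \frac{4}{p^2}\big(a^{p/2}-b^{p/2}\big)^2 ,
\]
again with the logarithmic convention at $p=1$. Assuming $a>b$, write $a-b=\int_b^a dt$, $\tfrac{1}{p-1}(a^{p-1}-b^{p-1})=\int_b^a t^{p-2}\,dt$ (which reads $\log a-\log b=\int_b^a t^{-1}dt$ at $p=1$) and $\tfrac{2}{p}(a^{p/2}-b^{p/2})=\int_b^a t^{p/2-1}\,dt$; Cauchy–Schwarz applied to $t^{p/2-1}=t^{(p-2)/2}\cdot 1$ then gives $\big(\int_b^a t^{p/2-1}dt\big)^2\le\big(\int_b^a t^{p-2}dt\big)(a-b)$, which is exactly the claim. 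Both sides are nonnegative: the product $(a^{p-1}-b^{p-1})(a-b)$ carries the sign of $p-1$.

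Inserting this bound into the double integral, using $K\ge 0$, and then the Gagliardo identity with $f=\mu^{p/2}$, one arrives at
\[
\frac{1}{p-1}\int_{\T^d}\mu^{p-1}(-\D)^{\gamma/2}\mu\,dx\ \ge\ \frac{2c_{d,\gamma}}{p^2}\int_{\T^d}\!\int_{\T^d}(\mu^{p/2}(x)-\mu^{p/2}(y))^2K(x-y)\,dx\,dy\ =\ \frac{4}{p^2}\int_{\T^d}|(-\D)^{\gamma/4}\mu^{p/2}|^2\,dx ,
\]
which is even a bit stronger than \eqref{ineq:SVI}. The one genuinely delicate step is justifying the symmetrization and the use of Fubini in the presence of the principal value when $\gamma\in[1,2)$: this is handled by first truncating to $\{|x-y|>\epsilon\}$ and then letting $\epsilon\to0$, noting that the \emph{symmetrized} integrands are $O(|x-y|^{2}K(x-y))=O(|x-y|^{2-d-\gamma})$ near the diagonal, hence absolutely integrable since $\gamma<2$. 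This is the step I would write out in full.
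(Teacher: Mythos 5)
Your proof is correct and takes essentially the same route as the paper's: periodize the fractional Laplacian, symmetrize in $x\leftrightarrow y$, apply a pointwise inequality comparing $\frac{1}{p-1}(a^{p-1}-b^{p-1})(a-b)$ to $(a^{p/2}-b^{p/2})^2$, and recognize the Gagliardo form. The refinement is that your Cauchy--Schwarz derivation of the pointwise bound is uniform in $p\neq 0$, whereas the paper cites \cite{liskevich1993some} for $p\ge 1$ and writes out the identical Cauchy--Schwarz computation only for $p=1$ and $p<1$; your explicit truncation argument for the principal value when $\gamma\ge 1$ is also a careful addition that the paper leaves implicit. You are likewise right that the argument yields the stronger constant $4/p^2$: the paper ends with $\frac{2}{p^2}$ times the symmetrized double integral, which by the identity $c_{d,\gamma}\int_{\T^d}\int_{\T^d}(f(x)-f(y))^2K(x-y)\,dx\,dy = 2\int_{\T^d}|(-\D)^{\gamma/4}f|^2\,dx$ that you record equals $\frac{4}{p^2}\int_{\T^d}|(-\D)^{\gamma/4}\mu^{p/2}|^2\,dx$, so the stated lemma is conservative by a factor of $2$.
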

\begin{proof}
    We first record the following inequality: for all $t, s\ge 0$ and $p\ge 1$, 
    \begin{equation*}
        \frac{4(p-1)}{p^2}(t^\frac{p}{2} - s^\frac{p}{2})^2 \le (t-s)(t^{p-1}-s^{p-1}).
    \end{equation*}
    This can be found e.g. in \cite{liskevich1993some}. In the special case $p=1$, we have
    \begin{align*}
        4(t^\frac{1}{2}-s^\frac{1}{2})^2 &=\left( \int_s^t u^{\frac{1}{2}-1}\right)^2 \\
        &\le \left\vert \int_s^t du\right\vert \left\vert \int_s^t u^{-1}du\right\vert \\
        &= |t-s| |\log(t)-\log(s)| \\
        &= (t-s)(\log(t)-\log(s)),
    \end{align*}
    the last line being obtained by increasingness of the logarithm. Now, we use the following rewriting of the fractional Laplacian on the torus.
    \begin{align*}
        \int_{\T^d} \mu^{p-1} |\nab|^{\gamma}\mu\ dx &= c_{d,\ga}\int_{\T^d}\mu(x)^{p-1} \int_{\T^d}\sum_{k\in \Z^d} \frac{\mu(x)-\mu(y)}{|x-y-k|^{d+\gamma}} dydx \\
        &= \hal c_{d,\ga} \int_{\T^d}\int_{\T^d}  (\mu(x)^{p-1} -\mu(y)^{p-1})\sum_{k\in \Z^d}\frac{\mu(x)-\mu(y)}{|x-y-k|^{d+\gamma}}dydx,
    \end{align*}
    where we have swapped the variables in order to obtain the last line. Use the previous inequality to obtain\footnote{The special case $p=1$ follows accordingly, with again the convention that $\frac{x^0}{0} = \log x$.}
    \begin{equation*}
        \frac{1}{p-1}\int_{\T^d} \mu^{p-1} |\nab|^{\gamma}(\mu-\bmu) dx \ge \frac{2}{p^2}c_{d,\ga}\int_{\T^d}\int_{\T^d}\sum_{k\in\Z^d}\frac{|\mu(x)^{\frac{p}{2}} - \mu(y)^\frac{p}{2}|^2}{|x-y-k|^{\gamma +d}} dydx.
    \end{equation*}
    The last term is precisely the $\dot H^\frac{\gamma}{2}$ Gagliardo seminorm of $\mu^\frac{p}{2}$.

    We now prove the case $p<1$. As before, we record the following inequality: for all $t,s>0$ and $p< 1$,
    \begin{equation*}
        (t-s)(t^{p-1}-s^{p-1}) \le \frac{4(p-1)}{p^2}(t^\frac{p}{2}-s^\frac{p}{2})^2.
    \end{equation*}
    We proceed as in \cite{liskevich1993some}:
    \begin{align*}
        \frac{4}{p^2}(t^\frac{p}{2}-s^\frac{p}{2})^2 &=\left( \int_s^t u^{\frac{p}{2}-1}\right)^2 \\
        &\le \left\vert \int_s^t du\right\vert \left\vert \int_s^t u^{p-2}du\right\vert \\
        &= |t-s| \frac{|t^{p-1}-s^{p-1}|}{|p-1|} \\
        &= (t-s)\frac{(t^{p-1}-s^{p-1})}{p-1},
    \end{align*}
    where we have used that $t\mapsto t^{p-1}$ is decreasing. We then proceed as in the case $p\ge 1$.
\end{proof}
We now prove the following estimates. 
\begin{prop}\label{prop:classicalaprioriClogged}
    Let $T>0$ and $\mu_\ep\in C^{1,1}([0,T]\times \T^d)$ be a strong solution to \eqref{eq:approx} such that $\mu_{0,\ep} \ge \ep>0$. Then, the following estimates hold for all $t\ge 0$.
    \begin{enumerate}[label=$\left(\roman*\right)$]
    \item (decrease of the $L^1$ norm) $$\displaystyle\|\mu_\ep(t)\|_{L^1} \le \|\mu_{0,\ep}\|_{L^1},$$ 
        \item (conservation of mass) $$\displaystyle\int_{\T^d}\mu_\ep(t)dx = \int_{\T^d}d\mu_{0,\ep} =: \bmu_\ep,$$
        \item  (weak maximum principle) $t\mapsto \min \mu_\ep(t)$ is nondecreasing and $t\mapsto \max \mu_\ep(t)$ is nonincreasing,
        \item (dissipation of the $L^p$ norms) for all $p\in(1, \infty)$,    
        \begin{equation}\label{ineq:Lpdissipclogged}
        \displaystyle\|\mu_\ep(t)\|_{L^p}^p + \frac{p(p-1)}{p-m-1}\cd \int_0^t \int_{\T^d} \mu_\ep^{p-1-m}|\nab|^{s-d+2} \mu_\ep \ dxd\tau \le \|\mu_{0,\ep}\|_{L^p}^p,
        \end{equation}
        
        \item (energy dissipation) 
        \begin{equation}\label{ineq:energydissipationClogged}
            \displaystyle\int_{\T^d} \g\ast\mu_\ep(t)\mu_\ep(t)dx + \int_0^t\int_{\T^d}|\nab\g\ast\mu_\ep|^2 \mu_\ep^{-m}\ dxd\tau \le \int_{\T^d}\g\ast\mu_\ep(\tau)\mu_\ep(\tau)dx.
        \end{equation}
        
    \end{enumerate}
\end{prop}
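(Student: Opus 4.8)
The plan is to extract all five estimates from the divergence/gradient--flow structure of \eqref{eq:approx}, after recording one elementary identity. For $\mu\in C^\infty(\T^d)$, set $\alpha:=s-d+2\in(0,2)$; then \eqref{eq:defg} gives $\g\ast\mu=\cd\,(-\D)^{-\frac{d-s}{2}}(\mu-\bmu)$, whence
\begin{equation*}
  -\div(\nab\g\ast\mu)\;=\;\cd\,(-\D)^{\frac{\alpha}{2}}\mu\;=\;\cd\,|\nabla|^{\alpha}\mu .
\end{equation*}
In particular $\nab\g\ast\mu:=\nabla(\g\ast\mu)$ is smooth whenever $\mu$ is, even in the range $s\ge d-1$ where $\nab\g$ itself is not locally integrable, so \eqref{eq:approx} is a genuine parabolic equation for $\mu_\ep\in C^{1,1}$. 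Together with the strict positivity of $\mu_\ep$ established in $(iii)$, this makes every integration by parts and every power $\mu_\ep^{\beta}$, $\beta\in\R$, below legitimate.

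Integrating \eqref{eq:approx} over $\T^d$ makes both flux terms vanish by periodicity, so $\frac{d}{dt}\int_{\T^d}\mu_\ep=0$, which is $(ii)$; combined with $\mu_\ep>0$ this gives $\|\mu_\ep(t)\|_{L^1}=\int_{\T^d}\mu_\ep(t)=\int_{\T^d}\mu_{0,\ep}=\|\mu_{0,\ep}\|_{L^1}$, i.e. $(i)$ (with equality). For $(iii)$, rewrite the equation in non-divergence form, $\p_t\mu_\ep=-\cd\,\mu_\ep^{-m}|\nabla|^{\alpha}\mu_\ep+\nabla(\mu_\ep^{-m})\cdot(\nab\g\ast\mu_\ep)+\ep\D\mu_\ep$. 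Fixing $t$ and a point $\bar x$ where $\mu_\ep(t,\cdot)$ attains its maximum: $\nabla\mu_\ep(t,\bar x)=0$ annihilates the drift term, $\D\mu_\ep(t,\bar x)\le 0$, and by the periodic singular-integral representation of $|\nabla|^{\alpha}$ used in the proofs of Lemmas~\ref{lem:minprinciple} and \ref{prop:pwD} every increment $\mu_\ep(t,\bar x)-\mu_\ep(t,y)$ is $\ge 0$, so $|\nabla|^{\alpha}\mu_\ep(t,\bar x)\ge 0$ and the nonlocal term is $\le 0$ as well; hence $\p_t\mu_\ep(t,\bar x)\le 0$. Symmetrically, at a minimum point every increment is $\le 0$, so $|\nabla|^{\alpha}\mu_\ep(t,\bar x)\le 0$ (quantified by Lemma~\ref{lem:minprinciple}), $\D\mu_\ep(t,\bar x)\ge 0$, and $\p_t\mu_\ep(t,\bar x)\ge 0$. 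A Lipschitz-envelope argument — the first-order-in-time Taylor expansion, uniform in $x$, along an arbitrary curve of extrema, exactly as in the proof of Lemma~\ref{lem:mincurve} — then yields that $t\mapsto\min\mu_\ep(t,\cdot)$ is nondecreasing and $t\mapsto\max\mu_\ep(t,\cdot)$ is nonincreasing, i.e. $(iii)$. In particular $0<\ep\le\min\mu_{0,\ep}\le\mu_\ep(t,\cdot)\le\max\mu_{0,\ep}$, so $\mu_\ep$ stays strictly positive and bounded, which also yields global-in-time existence of the approximate solution.

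For $(iv)$, multiply \eqref{eq:approx} by $p\mu_\ep^{p-1}$ and integrate; integrating the flux term by parts, writing $\mu_\ep^{p-2-m}\nabla\mu_\ep=\frac{1}{p-1-m}\nabla(\mu_\ep^{p-1-m})$ (or $\nabla\log\mu_\ep$ in the borderline $p=m+1$), integrating by parts once more, and using the basic identity produces the term $-\frac{p(p-1)}{p-m-1}\cd\int_{\T^d}\mu_\ep^{p-1-m}|\nabla|^{\alpha}\mu_\ep$, while the viscous term contributes $-p(p-1)\ep\int_{\T^d}\mu_\ep^{p-2}|\nabla\mu_\ep|^2\le 0$; integrating in time from $0$ gives $(iv)$. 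The Stroock--Varopoulos inequality (Lemma~\ref{lem:SVI}, applied with exponent $p-m$) shows this dissipation term is in fact nonnegative, being bounded below by a fractional Sobolev seminorm of a power of $\mu_\ep$, so the estimate is a genuine dissipation regardless of the sign of $p-m-1$. For $(v)$, differentiate the interaction energy: by symmetry of the kernel $\frac{d}{dt}\int_{\T^d}\g\ast\mu_\ep\,\mu_\ep=2\int_{\T^d}(\g\ast\mu_\ep)\p_t\mu_\ep$; integrating the flux term by parts with $\nabla(\g\ast\mu_\ep)=\nab\g\ast\mu_\ep$ gives $-2\int_{\T^d}\mu_\ep^{-m}|\nab\g\ast\mu_\ep|^2$, and the viscous term equals $-2\ep\cd\,\||\nabla|^{\alpha/2}\mu_\ep\|_{L^2}^2\le 0$ by the basic identity; integrating in time (and discarding the superfluous factor in front of the nonnegative dissipation integral) gives $(v)$.

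I expect the only genuinely delicate step to be the weak maximum principle $(iii)$: it must be established before strict positivity of $\mu_\ep$ is known — hence before \eqref{eq:approx} is even classically well-posed beyond a possible vanishing time — and it requires controlling a possibly non-smooth, non-unique curve of extrema. The sign of $|\nabla|^{\alpha}\mu_\ep$ at an interior extremum, read off the periodic singular-integral representation (the content of Lemmas~\ref{lem:minprinciple}--\ref{prop:pwD}), together with the envelope argument of Lemma~\ref{lem:mincurve}, is precisely what resolves it. The remaining four estimates are routine energy computations; the only bookkeeping is the sign of $p-m-1$ and the logarithmic borderline $p=m+1$, both taken care of by Lemma~\ref{lem:SVI}.
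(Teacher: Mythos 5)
Your proof is correct, and for the substantive part $(iii)$ it takes a genuinely different route from the paper. The paper runs a Kato-type truncation argument: after deducing $\mu_\ep\ge 0$ from $(i)$ and $(ii)$, it computes $\frac{d}{dt}\int_{\T^d}(\mu_\ep-c)_-\,dx$ for $0\le c\le\bmu_\ep$, recasts $\nabla(\mu_\ep^{-m})\indic_{\mu_\ep\le c}$ as a total derivative, and exploits the sign of the periodic kernel of $|\nabla|^{s-d+2}$ split over $\{\mu_\ep\le c\}$ and its complement (symmetrically for $(\mu_\ep-c)_+$). You instead run a pointwise extremum argument — $\nabla\mu_\ep=0$, $\D\mu_\ep\le 0$, $|\nabla|^{s-d+2}\mu_\ep\ge 0$ at a maximizer, all signs reversed at a minimizer — and propagate along a curve of extrema; this unifies $(iii)$ with the mechanism used in Propositions~\ref{prop:lowerbound} and~\ref{prop:upperbound}, whereas the truncation argument is self-contained and survives at lower regularity. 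One thing you must state explicitly rather than by reference: Lemma~\ref{lem:mincurve} \emph{assumes} monotonicity of $t\mapsto\min\mu_\ep(t,\cdot)$ (used in its proof only to get a.e.\ differentiability), so invoking it as a black box for $(iii)$ would be circular. Your phrase ``Lipschitz-envelope argument'' is the right fix — $\mu_\ep\in C^{1,1}$ makes $t\mapsto\min\mu_\ep(t,\cdot)$ Lipschitz with constant $\|\p_t\mu_\ep\|_\infty$, hence a.e.\ differentiable with no monotonicity input, after which the two-sided Taylor sandwich from the lemma's proof yields $\frac{d}{dt}\min\mu_\ep=\p_t\mu_\ep(\cdot,\bar x(\cdot))\ge 0$ and absolute continuity of Lipschitz functions closes the loop — but this should be written out, together with the maximal-interval continuation keeping $\mu_\ep$ bounded away from zero (you flag this but leave it implicit). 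Your other deviations are small and sharper: for $(v)$ the viscous term equals $\ep\cd\||\nabla|^{\frac{s-d+2}{2}}\mu_\ep\|_{L^2}^2\ge 0$ by Plancherel, so the paper's appeal to Lemma~\ref{lem:SVI} can be replaced by this one-liner; and deducing $(i)$ as an equality from $(ii)$ and positivity is cleaner than the paper's $\sqrt{\mu^2+\d^2}$ regularization, though it reverses the paper's dependency chain, which uses $(i)$ and $(ii)$ to first establish nonnegativity before proving $(iii)$.
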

\begin{remark}
    Combining the $L^p$ dissipation estimate \eqref{ineq:Lpdissipclogged} with the Stroock-Varopoulos inequality \eqref{ineq:SVI} yields, for all $p\in (1, \infty)$ and all $t\ge 0$,
    \begin{equation}\label{ineq:Lp+SVI}
        \|\mu_\ep(t)\|_{L^p}^p + \frac{p(p-1)}{(p-m)^2}\cd \int_0^t\int_{\T^d} ||\nab|^\frac{s-d+2}{2}\mu_\ep^\frac{p-m}{2}|^2dxd\tau \le \|\mu_{0,\ep}\|_{L^p}^p. 
    \end{equation}
\end{remark}
\begin{proof} We omit the $\ep$ subscript in the proof, so that it is more readable.

\bigbreak
\noindent \textbf{Proof of $(i)$ and $(ii)$.}
    We have
    \begin{align*}
        \frac{d}{dt}\int_{\T^d}\mu(t)dx = \int_{\T^d} \div(|\mu|^{-m}\nab\g\ast \mu) dx+ \ep \int_{\T^d}\D\mu dx 
        = 0,
    \end{align*}
    so that the mass is conserved. The decreasing of the $L^1$ norm is obtained by writing $\mu := \lim_{\d\to 0} \sqrt{\mu^2 + \d^2}$, integrating by parts, and letting $\d\to 0$. 
\bigbreak
\noindent \textbf{Proof of $(iii)$.}    
    What preceedes implies that, if $\mu_0 \geq 0$, then $\mu(t) \geq 0$ on its lifespan: indeed, let $\mu_+(t), \mu_-(t)$ denote the positive and negative parts of $\mu(t)$ respectively. Then
\begin{equation*}
    \frac{d}{dt}\int_{\T^d} \mu_\pm(t) dx = \hal\frac{d}{dt}\int_{\T^d} \left(|\mu(t)| \pm \mu(t) \right)dx \leq 0,
\end{equation*}
where the final inequality follows from the first two assertions. In particular, if $\int_{\T^d} (\mu_0)_- dx =0$, then $0\le \int_{\T^d} \mu_-(t) dx \le \int_{\T^d} (\mu_0)_- dx =0$, which implies that $\mu(t)_- = 0$. The same reverse reasoning works as well: if $\mu_0 \le 0$, then $\mu(t) \le 0$ on its lifespan. From now on, we suppose that $\mu_0\ge 0$, so that the equation reads
\begin{equation*}
     \p_t\mu - \div(\mu^{-m}\nab\g\ast \mu) = \ep\D\mu.
\end{equation*}
Let us now prove that $\inf\mu(t)\ge \inf\mu_0$ and $\sup\mu(t) \le \sup\mu_0$ for all $t>0$. Consider $0\leq c\leq \int_{\T^d}\mu_0$ and use integration by parts and \eqref{eq:defg} to compute,\footnote{Recall that we have defined $\bar\mu := \int_{\T^d}\mu_0dx$ to be the mass, which is conserved.}
\begin{align}
\frac{d}{dt}\int_{\T^d}(\mu-c)_{-}dx &= -\int_{\mu(t)\leq c}(\div(\mu(|\mu|+\ep)^{-m-1}\nabla\g\ast\mu) +\ep\D\mu)dx\nn\\
&=- \int_{\mu(t)\leq c}\nab(\mu(|\mu|+\ep)^{-m-1})\cdot\nabla\g\ast\mu dx + \cd\int_{\mu(t)\leq c}\mu(|\mu|+\ep)^{-m-1}\Dm^{2+s-d}(\mu-\bmu) dx \nn\\
&\quad -\ep\int_{\mu(t)=c}\nabla\mu\cdot \frac{\nabla\mu}{|\nabla\mu|}dx \label{eq:dtmucrhs}.
\end{align}
The last term is negative and can be discarded. For the first term, observe that $\nab (\mu(|\mu|+\ep)^{-m-1} )\indic_{\mu \le c}= -m (|\mu|+\ep)^{-m-1} \sgn(\mu) \nab \mu = \nab (\mu(|\mu | + \ep)^{-m-1}-c(c+\ep)^{-m-1})_+$ $\nabla\mu^{-m}\indic_{\mu(t)\leq c} = \nabla(\mu^{-m}-c^{-m})_{+}$ a.e. Hence, integrating by parts,
\begin{align*}
-\int_{\mu(t)\leq c}\nabla\mu^{-m} \cdot\nabla\g\ast\mu dx= -\cd\int_{\T^d}(\mu^{-m}-c^{-m})_+\Dm^{2+s-d}(\mu-\bar \mu)dx.
\end{align*}
Similarly, $\mu^{-m}\indic_{\mu\leq c} = (\mu^{-m}-c^{-m})_{+} +c^{-m}\indic_{\mu\leq c}$, which implies that the right-hand side of \eqref{eq:dtmucrhs} is less or equal to
\begin{align}\label{eq:rhsL1muc}
  \cd c^{-m}\int_{\mu(t)\leq c}\Dm^{2+s-d}(\mu-\bar\mu)dx.
\end{align}
We use the definition of the fractional Laplacian on $\T^d$ again to write
\begin{align*}
\int_{\mu(t)\leq c}\Dm^{2+s-d}(\mu-\bar\mu)dx = C_{s,d}\sum_{k\in\Z^d} \int_{\mu(t)\leq c}\int_{\T^d} \frac{\mu(t,x)-\mu(t,y)}{|x-y-k|^{d+(2+s-d)}}dydx.
\end{align*}
Evidently,
\begin{equation*}
\int_{\mu(t)\leq c}\int_{\mu(t)>c}\frac{\mu(t,x)-\mu(t,y)}{|x-y-k|^{d+(2+s-d)}}dydx \leq 0.
\end{equation*}
Since by swapping $x\leftrightarrow y$ and making the change of variable $-k\mapsto k$,
\begin{equation*}
\sum_{k}\int_{\mu(t)\leq c}\int_{\mu(t)\leq c}\frac{\mu(t,x)-\mu(t,y)}{|x-y-k|^{d+(2+s-d)}}dydx = \sum_{k}\int_{\mu(t)\leq c}\int_{\mu(t)\leq c}\frac{\mu(t,y)-\mu(t,x)}{|x-y-k|^{d+(2+s-d)}}dydx =0,
\end{equation*}
we conclude that \eqref{eq:rhsL1muc} is $\leq 0$. Hence, $\int_{\T^d}(\mu(t)-c)_{-}dx$ is nonincreasing, and so if $c$ is chosen such that $\inf\mu_0\geq c$, then $\int_{\T^d}(\mu(t)-c)_{-}dx=0$ for every $t$ in the lifespan of $\mu$. This implies that $\inf\mu(t)\geq \inf\mu_0$. An analogous argument shows that if $c\geq \bmu$, then $\int_{\T^d}(\mu(t)-c)_{+}dx$ is nonincreasing. In particular, if $c\geq \sup\mu_0$, then $\int_{\T^d}(\mu(t)-c)_{+}dx=0$ on the lifespan of $\mu$, implying $\sup\mu(t)\leq \sup\mu_0$. The same argument can be leveraged to any $0<s<t$, so that we actually have $t\mapsto\inf\mu(t)$ is nondecreasing, and $t\mapsto\mu(t)$ is nonincreasing.

\bigbreak
\noindent \textbf{Proof of $(iv)$.}
We now prove the estimate for the dissipation of the $L^p$ norms. Let $1<p<\infty$, $p\neq m+1$. By integration by parts, one obtains
\begin{align*}
    &\frac{d}{dt}\|\mu(t)\|_{L^p}^p \\
    &= \int_{\T^d} p \mu^{p-1} \div(\mu^{-m}\nab\g\ast\mu ) dx + \ep\int_{\T^d} p\mu^{p-1}\D\mu dx\\
    &=-pm\int_{\T^d} \mu^{p-m-2}\nab\mu\cdot\nab\g\ast\mu dx+p\int_{\T^d} \mu^{p-1-m}\D\g\ast\mu dx-\ep p(p-1) \int_{\T^d} \mu^{p-2}|\nab\mu|^2dx\\
    &\le-\frac{pm}{p-m-1}\int_{\T^d} \nab(\mu^{p-m-1})\cdot \nab\g\ast\mu dx-p\cd \int_{\T^d} \mu^{p-1-m}|\nab|^{s-d+2}(\mu-\bmu) dx\\
    &=-\frac{pm+p(p-m-1)}{p-m-1}\cd \int_{\T^d} \mu^{p-1-m}|\nab|^{s-d+2}(\mu-\bmu) dx \\
    &=-\frac{p(p-1)}{p-m-1}\cd \int_{\T^d} \mu^{p-1-m}|\nab|^{s-d+2}(\mu-\bmu)dx.
\end{align*}
For the case $p=m+1$, the computations give
\begin{align*}
    \frac{d}{dt}\|\mu(t)\|_{L^{m+1}}^{m+1} &\le -m(m+1) \int_{\T^d} \nab\log\mu \cdot \nab\g\ast\mu dx \\
    &= -m(m+1)\cd \int_{\T^d}\log\mu |\nab|^{s-d+2}(\mu-\bmu)dx.
\end{align*}

\bigbreak
\noindent \textbf{Proof of $(v)$.}
Finally, consider the energy dissipation. By integration by parts, compute
\begin{align*}
    \frac{d}{dt}\hal\int_{\T^d}\g\ast\mu\mu dx &= \int_{\T^d}\g\ast\mu \div(\mu^{-m}\nab\g\ast\mu) dx+ \ep \int_{\T^d} \g\ast\mu  \D\mu dx\\
    &=-\int_{\T^d}|\nab\g\ast\mu|^2\mu^{-m} dx+ \ep\int_{\T^d} \D\g\ast\mu  \mu dx .
\end{align*}
The viscous term can be written, using the definition of $\g$ in \eqref{eq:defg}, as 
\begin{align*}
    \ep\int_{\T^d} \D\g\ast\mu  \mu dx  &= -\ep\cd  \int_{\T^d } (-\D)^\frac{s-d+2}{2}(\mu-\bmu)   \mu dx \\
    &= -\ep\cd  \int_{\T^d } (-\D)^\frac{s-d+2}{2}\mu \mu dx.
\end{align*}
We conclude using Lemma \ref{lem:SVI} that this term is $\le 0$, hence can be dropped.
\end{proof}

We now establish a lower barrier for strong solutions.

\begin{prop}\label{prop:lowerbound}
    Let $T>0$ and $\mu_\ep\in C^{1,1}([0,T]\times\T^d)$ be a strong solution to \eqref{eq:approx} such that $\mu_{0,\ep}\ge \ep>0$ on $\T^d$. Then, there exists a constant $C_{d,s}>0$ depending only on $s,d$ such that for all $t>0$ and $x\in\T^d$,
    \begin{equation}\label{eq:lowerbound}
        \mu_\ep(t,x)\ge \Phi_\ep(t),
    \end{equation}
    where $\Phi_\ep$ is the unique solution to
    \begin{equation*}
    \begin{cases}
        \displaystyle\dot \Phi_\ep = C_{d,s} \frac{\bmu_\ep-\Phi_\ep}{\Phi^m_\ep},\\
        \Phi_\ep(0)=\ep.
    \end{cases}
    \end{equation*}
\end{prop}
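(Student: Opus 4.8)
The plan is to reduce the claimed pointwise bound to a scalar differential inequality for the spatial minimum
$$m_\ep(t):=\min_{x\in\T^d}\mu_\ep(t,x),$$
and then invoke an ODE comparison against $\Phi_\ep$. First I would record the basic properties of $m_\ep$. By Proposition~\ref{prop:classicalaprioriClogged}$(iii)$, $t\mapsto m_\ep(t)$ is nondecreasing; it is Lipschitz since $\mu_\ep\in C^{1,1}$ in time; one has $m_\ep(0)=\min\mu_{0,\ep}\ge\ep$; and $m_\ep(t)\le\int_{\T^d}\mu_\ep(t)\,dx=\bmu_\ep$ for every $t$ because $|\T^d|=1$. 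Thus $m_\ep(t)\in[\ep,\bmu_\ep]$. Since $\mu_\ep$ solves the uniformly parabolic equation \eqref{eq:approx} (its mobility $\mu_\ep^{-m}$ is bounded because $\mu_\ep\ge\ep$, and the drift $\mu_\ep^{-m}\nab\g\ast\mu_\ep$ is of lower order than $\ep\D$), parabolic regularity makes $\mu_\ep(t,\cdot)$ smooth for $t>0$, so Lemmas~\ref{lem:minprinciple} and~\ref{lem:mincurve} apply. Picking any curve $\bar x(t)$ of minimum points of $\mu_\ep(t,\cdot)$, Lemma~\ref{lem:mincurve} gives $\frac{d}{dt}m_\ep(t)=\p_t\mu_\ep(t,\bar x(t))$ for a.e. $t>0$.

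Next I would evaluate the equation at $(t,\bar x(t))$. Writing
$$\div\big(\mu_\ep^{-m}\nab\g\ast\mu_\ep\big)=-m\,\mu_\ep^{-m-1}\,\nab\mu_\ep\cdot\nab\g\ast\mu_\ep+\mu_\ep^{-m}\,\D\g\ast\mu_\ep,$$
and using that $\bar x(t)$ is an interior minimum of a smooth function — so $\nab\mu_\ep(\bar x(t))=0$ and the viscous term $\ep\D\mu_\ep(\bar x(t))\ge 0$ can be dropped — I obtain $\p_t\mu_\ep(t,\bar x(t))\ge \mu_\ep^{-m}(\bar x(t))\,\D\g\ast\mu_\ep(t,\bar x(t))$. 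Now from \eqref{eq:defg}, composing with $|\nab|^{s-d+2}$ and using $|\nab|^{s-d+2}|\nab|^{d-s}=-\D$, we get $-\D\g=\cd\,|\nab|^{s-d+2}(\d_0-1)$, so, convolving with $\mu_\ep$ and recalling that $\bmu_\ep$ is a constant,
$$\D\g\ast\mu_\ep=-\cd\,|\nab|^{s-d+2}(\mu_\ep-\bmu_\ep)=-\cd\,|\nab|^{s-d+2}\mu_\ep.$$
Since $s-d+2\in(0,2)$, Lemma~\ref{lem:minprinciple} applied at the minimum point $\bar x(t)$ of $\mu_\ep(t,\cdot)$ gives $|\nab|^{s-d+2}\mu_\ep(t,\bar x(t))\le C\big(m_\ep(t)-\bmu_\ep\big)\le 0$ with $C=C(d,s)$. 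Combining these and setting $C_{d,s}:=\cd\,C$, and recalling $\mu_\ep(\bar x(t))=m_\ep(t)$,
$$\frac{d}{dt}m_\ep(t)\ \ge\ C_{d,s}\,\frac{\bmu_\ep-m_\ep(t)}{m_\ep(t)^m}\qquad\text{for a.e. }t>0,\qquad m_\ep(0)\ge\ep.$$

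Finally I would close by ODE comparison. The right-hand side $F(u):=C_{d,s}(\bmu_\ep-u)u^{-m}$ is $C^1$, hence Lipschitz with some constant $K$, on $[\ep,\bmu_\ep]$ (note $\ep\le\bmu_\ep$, since $\mu_{0,\ep}\ge\ep$ and $|\T^d|=1$). The function $\Phi_\ep$ solving $\dot\Phi_\ep=F(\Phi_\ep)$, $\Phi_\ep(0)=\ep$, then exists, is unique, and stays in $[\ep,\bmu_\ep]$ (it is nondecreasing and cannot cross the equilibrium $\bmu_\ep$). Setting $g(t):=(\Phi_\ep(t)-m_\ep(t))_+$, which is Lipschitz with $g(0)=0$, on $\{g>0\}$ one has $\dot g=\dot\Phi_\ep-\dot m_\ep\le F(\Phi_\ep)-F(m_\ep)\le K\,g$, while $\dot g=0$ a.e. on $\{g=0\}$; Gronwall's inequality forces $g\equiv 0$, i.e. $m_\ep(t)\ge\Phi_\ep(t)$ for all $t\ge0$. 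Then $\mu_\ep(t,x)\ge m_\ep(t)\ge\Phi_\ep(t)$, which is \eqref{eq:lowerbound}.

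I expect the only delicate point to be that the minimizing curve $\bar x(t)$ is a priori neither regular nor unique and $m_\ep$ is only Lipschitz; but this is exactly what Lemmas~\ref{lem:mincurve} and~\ref{lem:minprinciple} are designed to handle, so the differential inequality for $m_\ep$ is legitimate. The one genuinely substantive input beyond them is the algebraic identity $\D\g\ast\mu_\ep=-\cd\,|\nab|^{s-d+2}\mu_\ep$ coming from \eqref{eq:defg}, which turns the nonlocal drift into a fractional Laplacian of $\mu_\ep$ to which the minimum principle directly applies.
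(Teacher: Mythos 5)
Your proposal is correct and follows essentially the same route as the paper: evaluate the equation along a curve of minimizers, use Lemma~\ref{lem:mincurve} to justify differentiating $m_\ep(t)=\min\mu_\ep(t)$, drop the viscous and gradient terms by first/second order conditions, convert $\Delta\g\ast\mu_\ep$ into $-\cd|\nabla|^{s-d+2}\mu_\ep$ via \eqref{eq:defg}, and apply Lemma~\ref{lem:minprinciple}. The paper states the resulting differential inequality and stops, implicitly invoking ODE comparison; you spell out the Gronwall argument (and the invariance of $[\ep,\bmu_\ep]$ needed for the local Lipschitz bound on $F$), which is a helpful added detail but not a different method.
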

\begin{remark}\label{remark:lowerbound}
    As already mentioned in the introduction, the scaling law $t^{\frac{1}{1+m}}$ is that of \cite{stan2015transformations}, for type I self-similar solutions (obtained in the range $\frac{md}{s-d+2}<1$). We refer to \Cref{rem:introLowerbound} for a more detailed discussion. Let us only note that this implies for a.e. $t>0$ and $x\in \T^d$,
    \begin{equation}\label{eq:lowerboundexplicit}
        \mu_\ep(t,x)\ge \Phi_\ep(t) \ge \Phi_0(t) \ge C\min\bigg((\bmu_\ep t)^\frac{1}{1+m}, \quad \bmu_\ep\left(1-e^{-C_{d,s}t}\right) \bigg),
    \end{equation}
    for some $C>0$ depending on $s,d,m$.
\end{remark}
\begin{proof}
    Consider a curve $\bar x:[0,T]\to \T^d$ such that $\bar x(t)\in \argmin \mu_\ep(t)$, for all $t>0$. Then, since $t\mapsto\min\mu_\ep(t)$ is nondecreasing, one can appeal to Lemma \ref{lem:mincurve} and obtain, for almost every $t>0$,
    \begin{align*}
        \frac{d}{dt}\min\mu_\ep(t) &= \p_t\mu_\ep(t,\bar x(t))\\
        &=\div (\mu_\ep^{-m}\nab\g\ast\mu_\ep)(t,\bar x(t)) + \ep\D\mu_\ep(t,\bar x(t)) \\
        &\ge \frac{1}{(\min\mu_\ep(t))^m}\D\g\ast\mu_\ep(t,\bar x(t)) \\
        &= -\cd \frac{1}{(\min\mu_\ep(t))^m}(-\D)^\frac{s-d+2}{2}\mu_\ep(t,\bar x(t)). 
    \end{align*}
    Using Lemma \ref{lem:minprinciple} together with the conservation of the mass, we obtain
    \begin{equation*}
        \frac{d}{dt}\min\mu_\ep(t) \ge C \frac{\bmu_\ep-\min\mu_\ep(t)}{(\min\mu_\ep(t))^m },
    \end{equation*} 
    for some $C>0$ depending on $s,d$.
\end{proof}
We now prove that solutions can be instantaneously bounded even though the initial condition is not. This is not obvious since the mobility is degenerate at points where the solution is unbounded. In particular, this is in sharp contrast with the fast diffusion case. 
\begin{prop}\label{prop:upperbound}
    Let $T>0$ and $\mu_\ep\in C^{1,1}([0,T]\times \T^d)$ be a strong solution to \eqref{eq:approx}, with $\mu_{0,\ep}\ge \ep>0$. Define for all $1\le p \le \infty$ the following quantities:
    \begin{equation*}
    \begin{cases}
        \displaystyle \ga := \frac{md}{s-d+2}, &
        \displaystyle \d_p := \frac{d}{p(s-d+2)-md}, \\
        \displaystyle \zeta_p := \frac{(|p-m|+m)(s-d+2)}{(|p-m|+m)(s-d+2)-md}.
    \end{cases}
    \end{equation*}
    
    If $\ga \ge 1$, then for all $\displaystyle p> \ga$, there exists a continuous increasing map $\omega:\R_+\to\R_+$ depending only on $s,d,p$, such that $\omega(0)=0$, and
    \begin{equation}
        \forall t>0, \quad \|\mu_\ep(t)\|_{L^\infty} \le C t^{-\d_p}\|\mu_{0,\ep}\|_{L^p}^{\zeta_p} + \omega( \bmu_\ep),\label{instantboundClogged}
    \end{equation}
    where $C>0$ depends only on $d, s, p$.\footnote{When $d\ge 2$, we have $p>\ga>m$, so that $\displaystyle \zeta_p = \frac{p(s-d+2)}{p(s-d+2)-md}$. Actually, the reason why we have an absolute value of the form $|p-m| + m$ is only for $d=1$, where we could have $p< m$.}
    
    If $\ga <1$, we have
    \begin{equation}
       \forall t>0, \quad  \|\mu_\ep(t)\|_{L^\infty} \le C t^{-\d_1} \bmu_\ep^{\zeta_1} + C \bmu_\ep,
    \end{equation}
    where $C>0$ depends only on $s,d$.
\end{prop}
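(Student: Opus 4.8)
The plan is to split according to the dichotomy $\ga\ge1$ versus $\ga<1$, which reflects whether the initial datum carries usable $L^p$ integrability ($\ga\ge1$) or whether only a bound in terms of the conserved mass is expected ($\ga<1$). In both regimes all the auxiliary facts of Proposition~\ref{prop:classicalaprioriClogged} are available, in particular mass conservation together with positivity gives $\|\mu_\ep(t)\|_{L^1}=\bmu_\ep$, the maps $t\mapsto\max\mu_\ep(t)$ and $t\mapsto\|\mu_\ep(t)\|_{L^q}$ are nonincreasing, and the viscous term will always be discarded with a favorable sign (so the estimates are uniform in $\ep\ge0$).

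For $\ga<1$ (which forces $m<1$, since $md<s-d+2<d$) I would run the argument of Proposition~\ref{prop:lowerbound} along a curve $\bar x(t)\in\argmax\mu_\ep(t,\cdot)$. Since $t\mapsto\max\mu_\ep(t)$ is nonincreasing, Lemma~\ref{lem:mincurve} applies, and writing $y(t):=\|\mu_\ep(t)\|_{L^\infty}=\mu_\ep(t,\bar x(t))$ one gets, for a.e. $t>0$,
\[
\dot y(t)=\p_t\mu_\ep(t,\bar x(t))\le-\cd\,y(t)^{-m}(-\D)^{\frac{s-d+2}{2}}\mu_\ep(t,\bar x(t)),
\]
where both the viscous term and the term $\nab(\mu_\ep^{-m})\cdot\nab\g\ast\mu_\ep$ drop at $\bar x(t)$ (the latter because $\nab\mu_\ep(\bar x(t))=0$). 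Applying Lemma~\ref{prop:pwD} with $h=\mathrm{id}$ and $\al=s-d+2\in(0,2)$, and using $\|\mu_\ep(t)\|_{L^1}=\bmu_\ep$, we get for a.e.\ $t$ either $y(t)\le C'\bmu_\ep$ or $\dot y(t)\le-C\,y(t)^{1-m+\frac{s-d+2}{d}}\bmu_\ep^{-\frac{s-d+2}{d}}$. Because $y$ is nonincreasing, $\{t:y(t)>C'\bmu_\ep\}$ is an interval $(0,t_\ast)$ on which the ODE holds; integrating $\frac{d}{dt}y^{-\beta}\ge C\beta\bmu_\ep^{-(s-d+2)/d}$ with $\beta:=\frac{s-d+2}{d}-m>0$ (this positivity is exactly $\ga<1$) yields $y(t)\le(C\beta\bmu_\ep^{-(s-d+2)/d}t)^{-1/\beta}$, and since $1/\beta=\d_1$ and the accompanying power of $\bmu_\ep$ equals $\zeta_1$, the two alternatives combine to $\|\mu_\ep(t)\|_{L^\infty}\le Ct^{-\d_1}\bmu_\ep^{\zeta_1}+C\bmu_\ep$.

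For $\ga\ge1$ I would use a Moser iteration fed by the dissipation estimate \eqref{ineq:Lp+SVI}. Set $\gamma:=s-d+2$; when $d\ge2$ one has $\gamma<d$, so fractional Sobolev on $\T^d$ gives $\|v\|_{L^{2\theta}}^2\lesssim\||\nabla|^{\gamma/2}v\|_{L^2}^2+\|v\|_{L^2}^2$ with $\theta:=\frac{d}{d-\gamma}>1$. Using this with $v=\mu_\ep^{(q-m)/2}$, interpolating the lower-order term against the conserved mass $\bmu_\ep$, and inserting it into \eqref{ineq:Lp+SVI} at integrability level $q$, one obtains a recursive differential inequality linking $\|\mu_\ep\|_{L^q}$ to $\|\mu_\ep\|_{L^{\theta(q-m)}}$. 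Iterating along $q_0=p$, $q_{k+1}=\theta(q_k-m)$, one checks $q_{k+1}-\ga=\theta(q_k-\ga)$, hence $q_k-\ga=\theta^k(p-\ga)\to+\infty$ precisely because $p>\ga$. At each step, integrating the dissipation over a time interval and picking a good intermediate time (together with monotonicity of the $L^q$ norms) converts the bound at level $q_k$ into a pointwise-in-time bound at level $q_{k+1}$ carrying an extra negative power of $t$; multiplying these bounds and summing the geometric-type series in the exponents produces, in the limit $k\to\infty$, the time factor $t^{-\d_p}$ and the datum factor $\|\mu_{0,\ep}\|_{L^p}^{\zeta_p}$, while the accumulated mass-dependent lower-order contributions sum to a convergent series that defines the continuous increasing $\omega$ with $\omega(0)=0$. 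When $d=1$ one has instead $\gamma=s+1>1=d$, so $\dot H^{\gamma/2}(\T)\hookrightarrow L^\infty(\T)$ and a single application of \eqref{ineq:Lp+SVI} at level $p$, in place of an infinite iteration, already yields the stated bound with the same exponents.

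The main obstacle is the bookkeeping in the $\ga\ge1$ iteration: one must choose the exponent sequence so that it genuinely diverges (the content of $p>\ga$; when $p\le\ga$ it does not, which is consistent with the unavoidable $\omega(\bmu_\ep)$ correction), track how the time-decay exponents and the datum-norm exponents compound across infinitely many steps to land on exactly $\d_p$ and $\zeta_p$, and control the lower-order torus terms carefully enough that their accumulation is a finite increasing function of the mass rather than a divergent tail. A secondary technical point is the sign analysis in the Stroock--Varopoulos step when $p<m$ (only possible for $d=1$), which is the origin of the combination $|p-m|+m$ in the exponents, together with the borderline exponents $p=m$ and $p=m+1$, which are dealt with via the logarithmic convention of Lemma~\ref{lem:SVI} or by continuity in $p$.
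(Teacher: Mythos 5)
Your two-case structure (maximum-point ODE for $\ga<1$, De Giorgi--Moser iteration fed by the dissipation \eqref{ineq:Lp+SVI} for $\ga\ge1$) is exactly the paper's strategy, and the $\ga<1$ branch is carried out correctly: the ODE $\dot y\le -Cy^{1-m+\gamma/d}\bmu_\ep^{-\gamma/d}$ with $\gamma=s-d+2$ integrates to the exponent $1/\beta=\d_1$ as you say, and the ``either/or'' dichotomy from Lemma~\ref{prop:pwD} together with monotonicity of $\max\mu_\ep$ handles the crossover time cleanly. For $\ga\ge1$ the paper implements the same iteration but through the Nash--Gagliardo--Nirenberg inequality rather than the plain Sobolev embedding, which produces a different exponent sequence ($\tilde\gamma=d/(d-\gamma/2)$ versus your $\theta=d/(d-\gamma)$); both diverge for $p>\ga$, and one can check by scaling that both iterations land on the same $\d_p,\zeta_p$, so this is a genuine but inessential variation. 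You have correctly identified the remaining burden as bookkeeping, though the proposal does not actually carry it out.

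There is one claim that is not correct: the asserted shortcut for $d=1$. You write that $\dot H^{\gamma/2}(\T)\hookrightarrow L^\infty(\T)$ together with a single application of \eqref{ineq:Lp+SVI} already gives the stated exponents. Applying the embedding to $v=\mu_\ep^{(p-m)/2}$ and integrating the dissipation gives $\int_0^t\|\mu_\ep(\tau)\|_{L^\infty}^{p-m}\,d\tau\lesssim\|\mu_{0,\ep}\|_{L^p}^p+\text{l.o.t.}$, and monotonicity of $\|\mu_\ep(t)\|_{L^\infty}$ then yields $\|\mu_\ep(t)\|_{L^\infty}\lesssim t^{-1/(p-m)}\|\mu_{0,\ep}\|_{L^p}^{p/(p-m)}$. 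But $\d_p=1/(p\gamma-m)$ for $d=1$, and since $\gamma=s+1>1$ one has $1/(p-m)>1/(p\gamma-m)=\d_p$, so the single step gives a strictly worse blow-up rate as $t\to0$ and does not recover the stated exponent. You still need the iteration (or at least a genuine Gagliardo--Nirenberg interpolation rather than a bare embedding) to trade an extra power of the mass for the improvement from $1/(p-m)$ down to $\d_p$; the paper performs the full iteration in every dimension. A minor further point: in the $\ga\ge1$ iteration you bound $\|v\|_{L^2}^2=\|\mu_\ep\|_{L^{q-m}}^{q-m}$ by interpolation against $\bmu_\ep$, but when $q-m<1$ this is not an $L^p$ interpolation; it is more convenient (and what the paper effectively does via the $L^\lambda$ term with $\lambda=2/|p-m|$) to use Jensen's inequality directly, $\int\mu_\ep^{q-m}\le\bmu_\ep^{q-m}$ on the unit-volume torus, which sidesteps the issue.
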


\begin{remark}
    By interpolation between $L^p$ and $L^\infty$, we obtain for all $\ga<p\le q\le \infty$ and $t>0$,
    \begin{equation}\label{instantLpClogged}
        \|\mu_\ep(t)\|_{L^q}\le C t^{-\d_p(1-\frac{p}{q})}\|\mu_{0,\ep}\|_{L^p}^{\zeta_p(1-\frac{p}{q}) + \frac{p}{q}} +C\omega(\bmu_\ep)^{1-\frac{p}{q}}\|\mu_{0,\ep}\|_{L^p}^\frac{p}{q},
    \end{equation}
    for some $C>0$ depending additionally on $q$.
\end{remark}

\begin{remark}
    When $\ga < 1$, one obtains a regularization result given any initial finite mass measure. If $\ga \ge 1$, we need a priori more integrability on the initial condition. This threshold is the same as the one observed for self-similar solutions in \cite{stan2015transformations}, which we have already discussed.
\end{remark}
\begin{proof}
Again, we drop the $\ep$ subscript for convenience. 

\bigbreak
\textbf{The case $\ga <1$.}
Considering a curve of maximal points $t\mapsto\bar x(t)$, we evaluate the equation at $(t,\bar x(t))$ and obtain, thanks to Lemma \ref{lem:mincurve},
\begin{align*}
    \frac{d}{dt}\|\mu(t)\|_{L^\infty} &= \div (\mu^{-m}\nab\g\ast\mu) (t,\bar x(t)) + \ep\D\mu(t,\bar x(t))\\
    &\le m\ep^{-m-1}|\nab \mu(t,\bar x(t)) ||\nab\g\ast \mu(t,\bar x(t))| + \|\mu(t)\|_{L^\infty}^{-m} \D\g\ast\mu(t,\bar x(t))+ \ep\D\mu(t,\bar x(t)) \\
    &\le \|\mu(t)\|_{L^\infty}^{-m}\D\g\ast\mu(t,\bar x(t)),
\end{align*}
where we have used that $t\mapsto \min \mu(t)$ is nondecreasing together with first and second order conditions at maximum points. Now, appealing to Lemma \ref{prop:pwD} gives either
\begin{equation*}
    \frac{d}{dt}\|\mu(t)\|_{L^\infty} + C\|\mu(t)\|_{L^\infty}^{-m+1+\frac{s-d+2}{d}}\|\mu(t)\|_{L^1}^{-\frac{s-d+2}{d}} \le 0,
\end{equation*}
or
\begin{equation*}
    \|\mu(t)\|_{L^\infty} \le C_d\|\mu(t)\|_{L^1}.
\end{equation*}
Since the $L^1$ norm is nonincreasing, we obtain the result.

\bigbreak
\textbf{The case $\ga \ge 1$.}
We use an iterative argument in order to obtain the required bound. For any $a>1$, combining the $L^p$ dissipation \eqref{ineq:Lpdissipclogged} and the Stroock-Varopoulos inequality \eqref{ineq:SVI} yields
    \begin{equation}
        \|\mu(t)\|_{L^{a}}^a + C\frac{a(a-1)}{(a-m)^2}\int_0^t \int ||\nab|^\frac{s-d+2}{2}\mu^\frac{a-m}{2}|^2dxd\tau \le \|\mu_{0}\|_{L^a}^a.\label{eq:Lqdissip}
    \end{equation}
Given $p>1$, let us recall the Nash-Gagliardo-Nirenberg inequality (see e.g. \cite{Nirenberg59}):\footnote{See a small caveat on this after the proof.}
\begin{equation*}
    \| f \|_{L^r} \le C \||\nab|^\frac{s-d+2}{2} f\|_{L^2}^\frac{1}{1+\al}\|f\|_{L^q}^\frac{\al}{1+\al} + \| f\|_{L^\lambda},
\end{equation*}
    where 
    \begin{equation*}
    \begin{cases}
        \displaystyle \al = \frac{|p-m|+m}{|p-m|}, & \displaystyle  q = \frac{2(|p-m|+m)}{|p-m|},\\
        \displaystyle r = \frac{d(2|p-m| + m)}{|p-m|(d-\frac{s-d+2}{2})}>1, & \displaystyle \la = \frac{2}{|p-m|}.
    \end{cases}
    \end{equation*}
    Note that in the sequel, the constant $C$ will be uniformly bounded in the iteration procedure. Combine this for $f= \mu^\frac{|p-m|}{2}$ with the $L^p$ dissipation \eqref{eq:Lqdissip} with $a=|p-m|+m$ to obtain, 
    \begin{equation*}
       C\frac{(|p-m|+m)(|p-m|+m-1)}{(p-m)^2} \int_0^t \frac{\left( \|\mu^\frac{|p-m|}{2} \|_{L^r}- \|\mu^\frac{|p-m|}{2}\|_{L^\lambda}\right)^{2(1+\al)} }{\|\mu^\frac{|p-m|}{2}\|_{L^q}^{2\al}} d\tau \le \|\mu_0\|_{L^{|p-m|+m}}^{|p-m|+m}.
    \end{equation*}
    Some simplifications and the conservation of the mass then implies
    \begin{equation*}
         C\frac{(|p-m|+m)(|p-m|+m-1)}{(p-m)^2} \int_0^t \frac{\left( \|\mu \|_{L^\frac{r|p-m|}{2}}^\frac{|p-m|}{2}-  \bmu^\frac{|p-m|}{2}\right)^{2(1+\al)} }{\|\mu\|_{L^{|p-m|+m}}^{|p-m|+m}} d\tau \le \|\mu_0\|_{L^{|p-m|+m}}^{|p-m|+m}.
    \end{equation*}
    We use the decay of all $L^p$ norms to obtain
    \begin{equation*}
        C\frac{(|p-m|+m)(|p-m|+m-1)}{(p-m)^2}  t \left( \|\mu(t) \|_{L^\frac{r|p-m|}{2}}^\frac{|p-m|}{2}-  \bmu^\frac{|p-m|}{2}\right)^{2(1+\al)}\le \|\mu_0\|_{L^{|p-m|+m}}^{2(|p-m|+m)}.
    \end{equation*}
    Rearranging this expression gives
    \begin{align*}
        &\|\mu(t) \|_{L^\frac{r|p-m|}{2}}^\frac{|p-m|}{2}\\
        &\le \bmu^\frac{|p-m|}{2}+ \left( C\frac{(p-m)^2}{(|p-m|+m)(|p-m|+m-1)}t^{-1}\|\mu_0\|_{L^{|p-m|+m}}^{2(|p-m|+m)}\right)^\frac{1}{2(1+\al)} \\
        &= \bmu^\frac{|p-m|}{2}+ \left( C\frac{(p-m)^2}{(|p-m|+m)(|p-m|+m-1)}\right)^\frac{1}{2(1+\al)}t^{-\frac{|p-m|}{2(m+2 |p-m|)}}\|\mu_0\|_{L^{|p-m|+m}}^{\frac{(|p-m|+m)|p-m|}{ 2|p-m|+m}}.
    \end{align*}
    Now, recall the definition of $\lambda = \frac{2}{|p-m|}$ and use $(a+b)^\lambda \le 2^{\lambda-1}a^\lambda +2^{\lambda-1}b^\lambda$ to obtain
    \begin{multline*}
        \|\mu(t) \|_{L^\frac{r|p-m|}{2}} \le 2^{\lambda-1}\bmu \\
        + 2^{\lambda-1} \left(C\frac{(p-m)^2}{(|p-m|+m)(|p-m|+m-1)}\right)^{\frac{1}{2|p-m|+m}} t^{-\frac{1}{2|p-m|+m}} \|\mu_0\|_{L^{|p-m|+m}}^\frac{2(|p-m|+m)}{2|p-m|+m}.
    \end{multline*}
    The same argument works if we consider a sequence of times $t_k = (1-2^{-k})t$ and apply this on $(t_k, t_{k+1})$ instead of $(0,t)$. We then define
    \begin{equation*}
    \begin{cases}
        \displaystyle \tilde \gamma := \frac{d}{d-\frac{s-d+2}{2}}>1, \\
        \displaystyle 2p_{k+1} = \tilde \gamma (2|p_k-m|+m),
    \end{cases}
    \end{equation*}
    for some $p_0>\ga$. Therefore, we can easily see that for $d\ge 2$, we have $p_k>m$ for all $k\ge 0$, and $(p_k)_k$ is increasing since $p_0>\ga$. If $d=1$, then either $p_0>m$, and thus $p_k>m$ for all $k\ge 0$, or $p_0<m$, and then $p_k>m$ for all $k\ge 1$. In every cases, we have $p_k = \ga + \tilde \gamma^{k}(|p_0-m|+m-\ga)$, and $(p_k)_k$ is increasing. We thus arrive at
    \begin{multline*}
        \|\mu(t_{k+1}) \|_{L^{p_{k+1}}}\le 2^{\lambda_k-1} \bmu \\
        + 2^{\lambda_k-1} \left(C\frac{(p_k-m)^2}{(|p_k-m|+m)(|p_k-m|+m-1)}\right)^{\frac{\tilde \gamma}{2p_{k+1}}} 2^{\frac{\tilde \gamma (k+1)}{2p_{k+1}}}t^{-\frac{\tilde \gamma}{2p_{k+1}}} \|\mu(t_k)\|_{L^{p_k}}^\frac{\tilde \gamma (|p_k-m|+m)}{p_{k+1}}.
    \end{multline*}
    Now, note that $\lambda_k\sim_{k\to\infty} C_{p_0}\tilde \gamma^{-k}$ and $$\displaystyle \sup_k \frac{(p_k-m)^2}{(|p_k-m|+m)(|p_k-m|+m-1)} <\infty,$$ so that there is a constant $C>0$ depending additionally on $p_0$ such that
    \begin{equation*}
        \|\mu(t_{k+1}) \|_{L^{p_{k+1}}}\le C \bmu + C^{\frac{\tilde \gamma k}{2p_{k+1}}} t^{-\frac{\tilde \gamma}{2p_{k+1}}} \|\mu(t_k)\|_{L^{p_k}}^\frac{\tilde \gamma (|p_k-m|+m)}{p_{k+1}}.
    \end{equation*}
    This is the starting point of an iteration procedure which gives
    \begin{equation*}
        \|\mu(t_k)\|_{L^{p_k}} \le C^{\eta_k} t^{-\d_k} \|\mu_0\|_{L^{p_0}}^{\zeta_k} + C^{\eta_k}\bmu^{\eta_k} t^{\frac{1}{2p_k}},
    \end{equation*}
    where
    \begin{equation*}
    \begin{cases}
        \displaystyle \eta_k := \frac{1}{2p_k}\sum_{j=1}^{k-1}\tilde \gamma^{j}(k-j), & \displaystyle \d_k := \frac{1}{2p_k}\sum_{j=1}^k \tilde \gamma^j, \\
        \displaystyle \zeta_k := \tilde \gamma^k \frac{|p_0-m|+m}{p_k}.
    \end{cases}
    \end{equation*}
    Sending $k\to\infty$ gives the result.
\end{proof}

A caveat on the proof concerns our use of the Gagliardo-Nirenberg inequality for $L^p$ spaces which are not Banach spaces ($p<1$). This is resolved from the classical Nash-Gagliardo-Nirenberg interpolation inequality for zero mean functions on the torus:
\begin{equation*}
    \| f-\bar f\|_{L^r}\le C\||\nab|^\gamma f\|_{L^2}^\frac{1}{1+\al} \| f\|_{L^q}^\frac{\al}{1+\al}.
\end{equation*}
We then deduce the following:
\begin{equation*}
     \| f\|_{L^r}\le C\||\nab|^\gamma f\|_{L^2}^\frac{1}{1+\al} \| f\|_{L^q}^\frac{\al}{1+\al} + \| f\|_{L^1}.
\end{equation*}
We now interpolate, writing $f=f^\frac{\lambda}{a}f^{1-\frac{\lambda}{a}}$ and use H\"older's inequality, to obtain that $\| f\|_{L^1} \le \| f\|_\lambda^\frac{\lambda}{a} \| f\|_{r}^\frac{r }{b}$, with $1 = \frac{1}{a} + \frac{1}{b}$ and $1 = \frac{\lambda}{a} + \frac{r}{b}$. Then, use $xy\le \frac{1}{\al} x^\al + \frac{1}{\be}y^\be$ for $1=\frac{1}{\al} + \frac{1}{\be}$ to obtain
\begin{equation*}
    \| f\|_1 \le \frac{1}{\al} \| f\|_{\lambda}^\frac{\lambda\al}{a} + \frac{1}{\be} \| f\|_r^{\be(1-\frac{\lambda}{a})}.
\end{equation*}
Even though $\lambda\in (0,1)$, we can thus take $\be = (1-\frac{\lambda}{a})^{-1} > 1$ and conclude as follows:
\begin{align*}
    \| f\|_{L^r}\le C\||\nab|^\gamma f\|_{L^2}^\frac{1}{1+\al} \| f\|_{L^q}^\frac{\al}{1+\al} +\frac{\lambda}{a} \| f\|_\lambda + \frac{1}{\be}\| f\|_{L^r},
\end{align*}
which implies
\begin{equation*}
     \| f\|_{L^r}\le C\||\nab|^\gamma f\|_{L^2}^\frac{1}{1+\al} \| f\|_{L^q}^\frac{\al}{1+\al} + \| f\|_\lambda.
\end{equation*}

We must now provide a weak continuity argument in order for weak solutions to capture the initial datum. This fact is the reason for restricting ourselves to finite energy initial data, though this assumption could be somehow relaxed.

\begin{prop}[Weak continuity]\label{prop:continuity}
    Let $T>0$. Consider a strong solution $\mu_\ep\in C^{1,1}([0,T]\times \T^d)$ to \eqref{eq:approx}. Then, for any $\varphi\in C^{1,1}(
    [0,T]\times\T^d)$ and any $0\le s\le t$,
    \begin{multline*}
        \left\vert\int_{\T^d}\mu_\ep(t,x)\varphi(t,x)dx -\int_{\T^d}\mu_\ep(s, x)\varphi(s,x)dx \right\vert \\
        \le (\|\p_t\varphi\|_\infty \|\mu_{0,\ep}\|_{L^1} + \|\nab\varphi\|_\infty \|\mu_{0,\ep}\|_{\dot H^\frac{s-d}{2}}) \ \omega(t-s)  + \ep \|\D\varphi\|_\infty \|\mu_{0,\ep}\|_{L^1} (t-s),
    \end{multline*}
    for some modulus of continuity $\omega:\R_+\to \R_+$ depending on $m$.
\end{prop}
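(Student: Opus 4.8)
The strategy is to test \eqref{eq:approx} against $\varphi$ and integrate over the time slab $[s,t]$. Since $\mu_\ep\ge\ep>0$ is a smooth strong solution, $\tau\mapsto\int_{\T^d}\mu_\ep(\tau)\varphi(\tau)\,dx$ is $C^1$, and integrating by parts in space (no boundary terms on $\T^d$) so as to move the divergence of the flux and the Laplacian onto $\varphi$ yields
\begin{multline}\label{eq:wkcont-id}
    \int_{\T^d}\mu_\ep(t)\varphi(t)\,dx-\int_{\T^d}\mu_\ep(s)\varphi(s)\,dx=\int_s^t\!\!\int_{\T^d}\mu_\ep\,\p_\tau\varphi\,dx\,d\tau\\
    -\int_s^t\!\!\int_{\T^d}\nab\varphi\cdot\mu_\ep^{-m}\nab\g\ast\mu_\ep\,dx\,d\tau+\ep\int_s^t\!\!\int_{\T^d}\mu_\ep\,\D\varphi\,dx\,d\tau .
\end{multline}
By the decrease of the $L^1$ norm (Proposition~\ref{prop:classicalaprioriClogged}(i)), the first term on the right is bounded in absolute value by $\|\p_t\varphi\|_\infty\|\mu_{0,\ep}\|_{L^1}(t-s)$ and the last by $\ep\|\D\varphi\|_\infty\|\mu_{0,\ep}\|_{L^1}(t-s)$, the latter being exactly the final term of the claimed estimate.

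The flux term is the only one that needs care. Estimating $|\nab\varphi|\le\|\nab\varphi\|_\infty$ and writing $\mu_\ep^{-m}|\nab\g\ast\mu_\ep|=\big(\mu_\ep^{-m}|\nab\g\ast\mu_\ep|^2\big)^{1/2}\big(\mu_\ep^{-m}\big)^{1/2}$, the Cauchy--Schwarz inequality in $x$ and then in $\tau$ gives
\begin{equation*}
    \int_s^t\!\!\int_{\T^d}\mu_\ep^{-m}|\nab\g\ast\mu_\ep|\,dx\,d\tau\le\Big(\int_s^t\!\!\int_{\T^d}\mu_\ep^{-m}|\nab\g\ast\mu_\ep|^2\,dx\,d\tau\Big)^{1/2}\Big(\int_s^t\!\!\int_{\T^d}\mu_\ep^{-m}\,dx\,d\tau\Big)^{1/2}.
\end{equation*}
For the first factor, the energy dissipation estimate \eqref{ineq:energydissipationClogged} bounds the time integral of $\int_{\T^d}\mu_\ep^{-m}|\nab\g\ast\mu_\ep|^2$ by $\int_{\T^d}\g\ast\mu_{0,\ep}\,\mu_{0,\ep}\,dx$, which by \eqref{eq:defg} and Plancherel on $\T^d$ equals $\cd\|\mu_{0,\ep}\|_{\dot H^{\frac{s-d}{2}}}^2$ (with the homogeneous seminorm normalized consistently with \eqref{eq:defg}); this is precisely the norm appearing in the statement. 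For the second factor the degeneracy $\mu_\ep^{-m}$ of the mobility is genuinely felt, and we invoke the lower barrier of Proposition~\ref{prop:lowerbound}: since $|\T^d|=1$, $\int_{\T^d}\mu_\ep(\tau,\cdot)^{-m}\,dx\le\Phi_\ep(\tau)^{-m}$, and the ODE for $\Phi_\ep$ readily gives $\Phi_\ep(\tau)^{-m}\lesssim(\bmu_\ep\tau)^{-m/(1+m)}+\bmu_\ep^{-m}$ for all $\tau\ge0$. As $\tfrac{m}{1+m}<1$, the right-hand side is integrable in $\tau$ near $0$, uniformly in $\ep$ (recall $\bmu_\ep\to\bmu$, so $\bmu_\ep$ stays bounded above and away from $0$ for $\ep$ small when $\bmu>0$, the case $\bmu=0$ being trivial); hence, $\Phi_\ep$ being nondecreasing, there is an increasing $\rho\colon\R_+\to\R_+$ with $\rho(0)=0$, depending only on $m$ and $\bmu$, such that $\int_s^t\int_{\T^d}\mu_\ep^{-m}\,dx\,d\tau\le\int_0^{t-s}\Phi_\ep(\tau)^{-m}\,d\tau\le\rho(t-s)$. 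Putting the three bounds together and setting $\omega(h):=\max\big(h,\ \cd^{1/2}\rho(h)^{1/2}\big)$ gives the asserted inequality.

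The only genuine obstacle is the control of $\int_{\T^d}\mu_\ep^{-m}\,dx$ above: near $t=0$ the solution hugs its (small) infimum, so without an a~priori \emph{time-integrable} lower barrier that is, crucially, uniform in $\ep$, neither this quantity nor the flux $\mu_\ep^{-m}\nab\g\ast\mu_\ep$ would be under control; everything else is soft. Finally, all the bounds above are $\ep$-independent except the one carrying the explicit factor $\ep$, so \eqref{eq:wkcont-id} passes to the limit $\ep\to0$ and transfers the stated weak-continuity modulus to the constructed weak solution, which is precisely what allows it to attain the initial datum.
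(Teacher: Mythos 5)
Your proof is correct and follows essentially the same route as the paper's: expand the weak formulation over $[s,t]$, bound the transport and viscous terms by the $L^1$ norm, split the flux term by Cauchy--Schwarz into the energy-dissipation factor and $\int\mu_\ep^{-m}$, and control the latter via the lower barrier $\Phi_\ep$, whose $-m$ power is time-integrable near the origin uniformly in $\ep$. The only difference is that you make the dependence of $\omega$ on $\bmu$, $s$, $d$ explicit (the paper only records the dependence on $m$), which is if anything more precise.
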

\begin{remark}
    An inspection of the proof shows that this weak continuity holds not only for the solution we construct, but for any weak solution to \eqref{eq:clogged}.
\end{remark}
\begin{proof}
    We remove the $\ep$ subscript for convenience. Write the equation in its weak form:
    \begin{multline*}
        \int_{\T^d}\mu(t,x)\varphi(t,x)dx -\int_{\T^d}\mu(s,x)\varphi(s,x)dx \\=\int_s^t\int_{\T^d}\p_t \varphi(\tau,x) \mu(\tau,x) dxd\tau - \int_s^t\int_{\T^d}\nab\varphi(\tau,x)\cdot \nab\g\ast\mu(\tau,x)\mu^{-m}(\tau,x) dxd\tau \\
        + \ep \int_s^t\int_{\T^d} \D\varphi(\tau,x)\ \mu(\tau,x) dxd\tau.
    \end{multline*}
    Hence,
    \begin{align*}
        &\left\vert\int_{\T^d}\mu(t,x)\varphi(t,x)dx -\int_{\T^d}\mu(s,x)\varphi(s,x)dx\right\vert \\
        &\le \|\p_t\varphi\|_\infty\|\mu_0\|_{L^1} (t-s) + \|\nab\varphi\|_\infty \int_s^t \int_{\T^d}|\nab\g\ast\mu|\mu^{-m}dxd\tau + \ep \|\D\varphi\|_\infty \|\mu_0\|_{L^1} (t-s)\\
        &\le \|\p_t\varphi\|_\infty\|\mu_0\|_{L^1} (t-s) + \|\nab\varphi\|_\infty \left(\int_s^t\int_{\T^d}|\nab\g\ast\mu|^2\mu^{-m}dxd\tau\right)^\hal\left(\int_s^t\int_{\T^d} \mu^{-m} dxd\tau \right)^\hal \\
        &\quad  + \ep \|\D\varphi\|_\infty \|\mu_0\|_{L^1} (t-s).
    \end{align*}
    Now, use the energy dissipation \eqref{ineq:energydissipationClogged} and the lower bound from Proposition \ref{prop:lowerbound} to obtain
    \begin{align*}
        &\left\vert\int_{\T^d}\mu(t,x)\varphi(t,x)dx -\int_{\T^d}\mu(s,x)\varphi(s,x)dx\right\vert \\
        &\le \|\p_t\varphi\|_\infty\|\mu_0\|_{L^1} (t-s) + \|\nab\varphi\|_\infty \left( \int_{\T^d}g\ast\mu^s\mu^s\right)^\hal \left( \int_s^t\Phi_\ep^{-m}d\tau\right)^\hal +  \ep \|\D\varphi\|_\infty \|\mu_0\|_{L^1} (t-s)\\
        &= \|\p_t\varphi\|_\infty\|\mu_0\|_{L^1} (t-s) + \|\nab\varphi\|_\infty \|\mu_0\|_{\dot H^{\frac{s-d}{2}}} \left( \int_s^t\Phi^{-m}_\ep d\tau\right)^\hal + \ep \|\D\varphi\|_\infty \|\mu_0\|_{L^1} (t-s).
    \end{align*}
    Since $\Phi^{-m}_\ep$ is integrable uniformly in $\ep$, this gives the result.
\end{proof}

Now that all the a priori estimates have been obtained, we can build weak solutions to \eqref{eq:clogged} for any $m>0$. There exists $\mu_{\ep}\in C^{1,1}([0,\infty)\times\T^d)$ which solves the approximate problem \eqref{eq:approx} and satisfies the a priori estimates obtained so far. In particular, we have from Remark \eqref{remark:lowerbound},
\begin{equation*}
    \mu_\ep(t,x) \ge C\min\bigg((\bmu_\ep t)^\frac{1}{1+m}, \bmu_\ep (1-e^{-C_{d,s}t})\bigg).
\end{equation*}
Given $\varphi\in C^{1,1}([0,\infty)\times\T^d)$, we have for all $0<\tau< T$,
\begin{multline*}
    \int_{\T^d}\phi(T) \mu_\ep(T) dx -\int_{\T^d}\phi(\tau) \mu_\ep(\tau)dx-\int_\tau^T\int_{\T^d} \p_t\phi \mu_{\ep} dx dt + \int_\tau^T \int_{\T^d}\nab\phi\cdot \nab\g\ast\mu_{\ep}\mu_{\ep}^{-m}dxdt \\
    =  \ep\int_\tau^T\int_{\T^d} \D\phi \mu_{\ep} dxdt.
\end{multline*}
Passing to the limit in the linear terms is easy. For the nonlinear term, one must provide strong compactness as follows. Considering $\mu_0\in L^p$ for some arbitrary\footnote{We recall that $\displaystyle \ga := \frac{md}{s-d+2}$.} $p>\ga$ ($\mu_0$ is only a finite mass measure when we can take $p=1$, that is when $\ga <1$), we obtain thanks to the $L^p$ dissipation \eqref{ineq:Lp+SVI} and the regularization \eqref{instantLpClogged} that, for all $0<\tau<T$ and $q\in [p,\infty)$,
\begin{equation*}
    C\frac{q(q-1)}{(q-m)^2}\int_\tau^T \int_{\T^d} ||\nab|^{\frac{s-d+2}{2}}\mu_{\ep}^{\frac{q-m}{2}}|^2 dxdt \le C \tau^{-\d_p(q-p)}\|\mu_0\|_{L^p}^{\zeta_p(q-p) + p} + C \om(\bmu)^{q-p}\|\mu_0\|_{L^p}^p,
\end{equation*}
and, for $q\in (1,p)$,
\begin{equation*}
     C\frac{q(q-1)}{(q-m)^2}\int_\tau^T \int_{\T^d} ||\nab|^{\frac{s-d+2}{2}}\mu_{\ep}^{\frac{q-m}{2}}|^2 dxdt \le \|\mu_0\|_{L^q}^q.
\end{equation*}
Together with Proposition \ref{prop:upperbound}, we obtain that $(\mu^\frac{q-m}{2}_{\ep})_{\ep>0}$ is bounded in $L^2((\tau,T), H^\frac{s-d+2}{2}( \T^d))$, for all $q>m$. In particular, taking $q=m+2$ gives that $(\mu_\ep)_\ep$ is bounded in $L^2((\tau,T), H^\frac{s-d+2}{2}( \T^d))$. Therefore, one can extract some $\mu$ satisfying all the a priori estimates and such that $\mu_\ep\xrightarrow[\ep\to 0]{} \mu$ a.e. on $(\tau, T)\times \T^d$ and in $L^2((\tau,T)\times \T^d)$. In particular, for a.e. $t>0$ and $x\in \T^d$,
\begin{equation*}
    \mu(t,x) \ge C\min\bigg( (\bmu t)^\frac{1}{1+m}, \bmu (1-e^{-C_{d,s}t}) \bigg).
\end{equation*}

We then pass to the limit $\ep\to 0$ in the weak formulation above, and then appeal to the weak continuity result of Proposition \ref{prop:continuity} in order to pass to the limit $\tau\to 0$, which concludes the proof.

\section{Lower barrier for fast diffusions.}

In this section, we derive a lower barrier for fast diffusion equations, namely
\begin{equation*}
\begin{cases}
    \p_t \mu -\div (\mu^m \nab \g\ast\mu) = 0, \\
    \mu\vert_{t=0} = \mu_0 \ge 0,
\end{cases}
\end{equation*}
where $0<m<1$ and $\mu_0$ is a Borel measure of finite mass. The existence theory for this equation has been studied in \cite{stanTesoVazquez2019existence}. In the case $m\in (0,1)$, we generically expect infinite speed of propagation. However, such a property could only be proved for $d=1$ (or considering radial solutions in arbitrary dimension), where one can appeal to comparison principles in order to derive lower barriers (see \cite{STAN2016infinitespeed}). Though our method is a priori restricted to the torus, it allows us to derive lower barriers in any dimension, for non radial solutions. 

Again, we provide a priori estimates on the approximate problem
\begin{equation}\label{eq:approxFD}
\begin{cases}
    \p_t\mu_\ep -\div (\mu^m_\ep \nab\g\ast\mu_\ep) = \ep\D\mu_\ep, \\
    \mu_\ep\vert_{t=0} = \mu_{0,\ep},
\end{cases}
\end{equation}
where $\mu_{0,\ep} \ge \ep >0$ is smooth and $\mu_{0,\ep} \xrightharpoonup[\ep\to 0]{} \mu_0$ in the sense of measures. 

For the sake of completeness, we provide known a priori estimates, which can be found in \cite{stanTesoVazquez2019existence}.

\begin{prop}\label{prop:classicalaprioriFD}
    Let $T>0$ and $\mu_\ep\in C^{1,1}([0,T]\times \T^d)$ be a nonnegative solution to \eqref{eq:approxFD}. Then, the following estimates hold for all $t\ge 0$.
    \begin{enumerate}[label=$\left(\roman*\right)$]
        \item (decrease of the $L^1$ norm) $$\displaystyle\|\mu(t)\|_{L^1} \le \|\mu_{\ep,0}\|_{L^1},$$ 
        \item (conservation of mass)
        $$\displaystyle\int_{\T^d}\mu_\ep(t)dx = \int_{\T^d}\mu_{\ep,0}dx =: \bmu_{\ep},$$ 
        \item (weak maximum principle) $t\mapsto \min\mu_\ep(t)$ is nondecreasing and $t\mapsto \max\mu_\ep(t)$ is nonincreasing,
        \item (dissipation of the $L^p$ norms), for all $p\in(1, \infty)$,    
        \begin{equation}
        \displaystyle\|\mu_\ep(t)\|_{L^p}^p + \frac{p(p-1)}{p+m-1}\cd \int_0^t \int_{\T^d} \mu_\ep^{p-1+m}|\nab|^{s-d+2} \mu_\ep\ dxd\tau \le \|\mu_{\ep,0}\|_{L^p}^p\label{ineq:LpdissipFD},
        \end{equation}
        
        \item  (energy dissipation)  
        \begin{equation*}
            \displaystyle\int_{\T^d} \g\ast\mu_\ep(t)\mu_\ep(t)dx + \int_0^t\int_{\T^d}|\nab\g\ast\mu_\ep|^2 \mu_\ep^{m}\ dxd\tau \le \int_{\T^d}\g\ast\mu_{\ep,0}\mu_{\ep,0}dx.
        \end{equation*}
        
    \end{enumerate}
\end{prop}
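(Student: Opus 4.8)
The plan is to run the proof of Proposition~\ref{prop:classicalaprioriClogged} essentially verbatim, with the mobility $\mu_\ep^{-m}$ replaced throughout by $\mu_\ep^{m}$ (equivalently, $-m$ replaced by $m$). The situation is in fact slightly simpler here: since $m\in(0,1)$ the map $t\mapsto t^{m}$ is continuous up to $t=0$, and since $\mu_{0,\ep}\ge\ep>0$ while item $(iii)$ propagates the lower bound on $\min\mu_\ep(\cdot)$, the mobility $\mu_\ep^{m}$ stays smooth and bounded below along the solution, so no regularisation of the nonlinearity is needed. For $(i)$ and $(ii)$, integrating \eqref{eq:approxFD} over $\T^d$ annihilates both the divergence term and the Laplacian term, which gives conservation of mass; since $\mu_\ep\ge0$, $\|\mu_\ep(t)\|_{L^1}=\bmu_\ep$ is in fact constant (to avoid invoking positivity one may alternatively write $\mu_\ep=\lim_{\d\to0}\sqrt{\mu_\ep^2+\d^2}$, integrate by parts and let $\d\to0$, exactly as in Proposition~\ref{prop:classicalaprioriClogged}).

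\textbf{The weak maximum principle $(iii)$.} Fix $0\le c\le\bmu_\ep$ and compute $\frac{d}{dt}\int_{\T^d}(\mu_\ep-c)_-\,dx$ through a smooth convex approximation $\beta_\d$ of $x\mapsto x_-$. On the torus integration by parts creates no boundary term: the viscous contribution is $-\ep\int_{\T^d}\beta_\d''(\mu_\ep-c)|\nab\mu_\ep|^2\,dx\le0$ and is discarded, while the transport contribution equals $-\int_{\T^d}\nab[G_\d(\mu_\ep)]\cdot\nab\g\ast\mu_\ep\,dx=-\cd\int_{\T^d}G_\d(\mu_\ep)\,|\nab|^{s-d+2}(\mu_\ep-\bmu_\ep)\,dx$, where $G_\d'(t)=\beta_\d''(t-c)\,t^{m}$. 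As $\d\to0$ the factor $\beta_\d''(\cdot-c)$ concentrates at $\{\cdot=c\}$, so $G_\d(\mu_\ep)\to c^{m}\indic_{\mu_\ep>c}$ modulo an additive constant, the constant being harmless since $\int_{\T^d}|\nab|^{s-d+2}\mu_\ep\,dx=0$. One is thus reduced to the sign of $-\cd\,c^{m}\int_{\mu_\ep>c}|\nab|^{s-d+2}\mu_\ep\,dx$; writing $|\nab|^{s-d+2}$ in its torus representation (as in Lemma~\ref{lem:minprinciple}), the diagonal block $\{\mu_\ep>c\}\times\{\mu_\ep>c\}$ vanishes by the $x\leftrightarrow y$, $k\mapsto-k$ antisymmetry and the cross block $\{\mu_\ep>c\}\times\{\mu_\ep\le c\}$ is nonnegative, so $\frac{d}{dt}\int_{\T^d}(\mu_\ep-c)_-\,dx\le0$. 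Taking $c=\min\mu_{0,\ep}$ yields $\min\mu_\ep(t)\ge\min\mu_{0,\ep}\ge\ep$, and running the argument from an arbitrary $\tau>0$ gives that $t\mapsto\min\mu_\ep(t)$ is nondecreasing; the symmetric computation with $(\mu_\ep-c)_+$ for $c\ge\bmu_\ep$ gives that $t\mapsto\max\mu_\ep(t)$ is nonincreasing. (Nothing in this step uses the sign of $m$.)

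\textbf{The dissipation estimates $(iv)$ and $(v)$.} For $(iv)$, multiply \eqref{eq:approxFD} by $p\mu_\ep^{p-1}$ and integrate over $\T^d$: the viscous term is $-\ep p(p-1)\int_{\T^d}\mu_\ep^{p-2}|\nab\mu_\ep|^2\,dx\le0$ and is dropped, and one integration by parts turns the transport term into $-p(p-1)\int_{\T^d}\mu_\ep^{p+m-2}\nab\mu_\ep\cdot\nab\g\ast\mu_\ep\,dx=-\frac{p(p-1)}{p+m-1}\int_{\T^d}\nab(\mu_\ep^{p+m-1})\cdot\nab\g\ast\mu_\ep\,dx$ (legitimate with $p+m-1>0$, so no logarithmic case occurs for $p>1$, $m>0$); integrating by parts once more and using $\D\g\ast\mu_\ep=-\cd|\nab|^{s-d+2}(\mu_\ep-\bmu_\ep)$, this is $-\frac{p(p-1)}{p+m-1}\cd\int_{\T^d}\mu_\ep^{p+m-1}|\nab|^{s-d+2}\mu_\ep\,dx$, and integration in time gives \eqref{ineq:LpdissipFD}. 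For $(v)$, test \eqref{eq:approxFD} against $\g\ast\mu_\ep$ and use the symmetry of $\g$: $\frac{d}{dt}\int_{\T^d}\g\ast\mu_\ep\mu_\ep\,dx=-2\int_{\T^d}\mu_\ep^{m}|\nab\g\ast\mu_\ep|^2\,dx-2\ep\cd\big\||\nab|^{\frac{s-d+2}{2}}\mu_\ep\big\|_{L^2}^2$, the last term being $\le0$ by Lemma~\ref{lem:SVI} with $p=2$ (equivalently because $\g$ is positive-definite); discarding it, and keeping only a coefficient $1$ in front of the nonnegative dissipation term, integration in time yields the stated inequality a fortiori.

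\textbf{Main obstacle.} There is no real obstacle: the statement is essentially that of \cite{stanTesoVazquez2019existence}, and every manipulation above duplicates one already carried out for the clogged model. The only points needing care are $(a)$ justifying the formal computation of $\frac{d}{dt}\int(\mu_\ep-c)_\pm\,dx$ via the $\beta_\d$-regularisation, so that no spurious boundary term on $\{\mu_\ep=c\}$ is generated and the sign of the viscous term is manifest; $(b)$ the algebraic collapse $\frac{pm}{p+m-1}+p=\frac{p(p-1)}{p+m-1}$ (avoided altogether by the direct integration by parts above) together with the fact that all discarded viscous terms are nonpositive for every $p\in(1,\infty)$; and $(c)$ recording $\int_{\T^d}|\nab|^{s-d+2}\mu_\ep\,dx=0$, which is exactly what lets the surviving Dirac contribution in $(iii)$ be rewritten as an integral over $\{\mu_\ep>c\}$ (or its complement).
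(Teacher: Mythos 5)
Your proof is correct and follows the same strategy as the paper. The paper itself proves only item $(iii)$ in detail (citing \cite{stanTesoVazquez2019existence} for $(i)$, $(ii)$, $(iv)$, $(v)$), and your argument for $(iii)$ differs from it only in implementation: the paper truncates directly, rewriting $\nab\mu_\ep^m$ restricted to $\{\mu_\ep\le c\}$ as the gradient of a truncated power and integrating by parts, whereas you smooth $x\mapsto x_-$ by a convex $\beta_\d$ and send $\d\to0$. Both roads reduce to the sign of $\cd c^m\int_{\{\mu_\ep\gtrless c\}}|\nab|^{s-d+2}\mu_\ep\,dx$ — the sub-level and super-level integrals being negatives of one another since $\int_{\T^d}|\nab|^{s-d+2}\mu_\ep\,dx=0$ — and both conclude via the identical torus-kernel antisymmetry and sign-of-cross-block observation. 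Your $\beta_\d$-route is, if anything, slightly tidier here, because $\mu\mapsto\mu^m$ is \emph{increasing} for $m\in(0,1)$, the opposite monotonicity to the clogged mobility $\mu^{-m}$; a literal repetition of the clogged truncation identity $\nab\mu_\ep^{-m}\indic_{\mu_\ep\le c}=\nab(\mu_\ep^{-m}-c^{-m})_+$ would require replacing $(\mu_\ep^m-c^m)_+$ by $(c^m-\mu_\ep^m)_+$ with the attendant sign changes, a bookkeeping detail your regularisation sidesteps. Your computations for $(iv)$ and $(v)$ reproduce those the paper performs for the clogged model with $-m\mapsto m$ and are correct; as you note, $p+m-1>0$ automatically for $p>1$, $m>0$, so the logarithmic case $p=1+m$ that arises for the clogged model does not occur here.
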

\begin{remark}
    Combining the $L^p$ dissipation estimate \eqref{ineq:LpdissipFD} with the Stroock-Varopoulos inequality from Lemma \ref{lem:SVI} yields, for all $p\in (1,\infty)$ and all $t\ge 0$,
    \begin{equation}\label{ineq:LpFD+SVI}
        \|\mu_\ep(t)\|_{L^p}^p + \frac{p(p-1)}{(p+m)^2}\cd \int_0^t\int_{\T^d} ||\nab|^\frac{s-d+2}{2}\mu_\ep^\frac{p+m}{2}|^2dxd\tau \le \|\mu_{\ep,0}\|_{L^p}^p. 
    \end{equation}
\end{remark}
\begin{proof}
   Most of these a priori estimates are known, and we refer to \cite[Theorem 2.2]{stanTesoVazquez2019existence}. Let us make a precision concerning what we choose to call ``weak maximum principle''. The proof works as in the clogged diffusion case, and we drop the $\ep$ subscript for clarity. First, if $\mu_0 \geq 0$, then $\mu(t) \geq 0$ on its lifespan. Indeed, let $\mu_+(t), \mu_-(t)$ denote the positive and negative parts of $\mu(t)$ respectively. Then
\begin{equation*}
    \frac{d}{dt}\int_{\T^d} \mu_\pm(t) dx = \hal\frac{d}{dt}\int_{\T^d} \left(|\mu(t)| \pm \mu(t) \right)dx \leq 0,
\end{equation*}
where the final inequality follows from mass conservation and nonincreasingness of the $L^1$ norm. In particular, if $\int_{\T^d} (\mu_0)_- dx =0$, then $0\le \int_{\T^d} \mu_-(t) dx \le \int_{\T^d} (\mu_0)_- dx =0$, which implies that $\mu(t)_- = 0$. Accordingly, if $\mu_0 \le 0$, then $\mu(t) \le 0$ on its lifespan.

Let us now prove that $\inf\mu(t)\ge \inf\mu_0$ and $\sup\mu(t) \le \sup\mu_0$ for all $t\in (0, T)$. Consider $0\leq c\leq \int_{\T^d}\mu_0 dx$ and use integration by parts to compute
\begin{align}
\frac{d}{dt}\int_{\T^d}(\mu-c)_{-}dx &= -\int_{\mu(t)\leq c}(\div(\mu^{m}\nabla\g\ast\mu) +\ep\D\mu)dx \nn\\
&=- \int_{\mu(t)\leq c}\nab\mu^{m}\cdot\nabla\g\ast\mu dx + \cd\int_{\mu(t)\leq c}\mu^{m}\Dm^{2+s-d}(\mu-\bmu) dx \nn\\
&\quad -\ep\int_{\mu(t)=c}\nabla\mu\cdot \frac{\nabla\mu}{|\nabla\mu|}dx \label{eq:dtmucrhsFD}.
\end{align}
The last term is negative and can be discarded. For the first term, observe that $\nabla\mu^{m}\indic_{\mu(t)\leq c} = \nabla(\mu^{m}-c^{m})_{+}$ a.e. Hence, integrating by parts,
\begin{align*}
-\int_{\mu(t)\leq c}\nabla\mu^{m} \cdot\nabla\g\ast\mu = -\cd\int_{\T^d}(\mu^{m}-c^{m})_+\Dm^{2+s-d}(\mu-\bmu)dx.
\end{align*}
Similarly, $\mu^{m}\indic_{\mu\leq c} = (\mu^{m}-c^{m})_{+} +c^{m}\indic_{\mu\leq c}$, which implies that the right-hand side of \eqref{eq:dtmucrhsFD} is equal to
\begin{align}\label{eq:rhsL1mucFD}
\cd c^{m}\int_{\mu(t)\leq c}\Dm^{2+s-d}(\mu-\bmu)dx.
\end{align}
We develop this into
\begin{align*}
\int_{\mu(t)\leq c}\Dm^{2+s-d}(\mu -\bmu) dx = C_{s,d}\sum_{k\in\Z^d} \int_{\mu(t)\leq c}\int_{\T^d} \frac{\mu(t,x)-\mu(t,y)}{|x-y-k|^{d+(2+s-d)}}dydx.
\end{align*}
Evidently,
\begin{equation*}
\int_{\mu(t)\leq c}\int_{\mu(t)>c}\frac{\mu(t,x)-\mu(t,y)}{|x-y-k|^{d+(2+s-d)}}dydx \leq 0.
\end{equation*}
Since by swapping $x\leftrightarrow y$ and making the change of variable $-k\mapsto k$,
\begin{equation*}
\sum_{k}\int_{\mu(t)\leq c}\int_{\mu(t)\leq c}\frac{\mu(t,x)-\mu(t,y)}{|x-y-k|^{d+(2+s-d)}}dydx = \sum_{k}\int_{\mu(t)\leq c}\int_{\mu(t)\leq c}\frac{\mu(t,y)-\mu(t,x)}{|x-y-k|^{d+(2+s-d)}}dydx =0,
\end{equation*}
we conclude that \eqref{eq:rhsL1mucFD} is $\leq 0$. Hence, $\int_{\T^d}(\mu(t)-c)_{-}dx$ is nonincreasing, and so if $c$ is chosen such that $\inf\mu_0\geq c$, then $\int_{\T^d}(\mu(t)-c)_{-}dx=0$ for every $t$ in the lifespan of $\mu$. This implies that $\inf\mu(t)\geq \inf\mu_0$. An analogous argument shows that if $c\geq \bmu$, then $\int_{\T^d}(\mu(t)-c)_{+}dx$ is nonincreasing. In particular, if $c\geq \sup\mu_0$, then $\int_{\T^d}(\mu(t)-c)_{+}dx=0$ on the lifespan of $\mu$, implying $\sup\mu(t)\leq \sup\mu_0$. The same argument can be leveraged to any $0<s<t$, so that we actually have $t\mapsto\inf\mu(t)$ is nondecreasing, and $t\mapsto\mu(t)$ is nonincreasing.
\end{proof}
Let us now state the lower barrier result we can prove for this equation. This shows in particular infinite speed of propagation on a bounded domain.
\begin{prop}
    Let $T>0$ and $\mu_\ep \in C^{1,1}([0,T]\times \T^d)$ be a solution to \eqref{eq:approxFD} such that $\mu_{0,\ep} \ge \ep >0$. Then, for all $t>0$ and $x\in\T^d$,
    \begin{equation*}
        \mu_\ep(t,x) \ge \Phi_\ep(t),
    \end{equation*}
    where $\Phi_\ep$ is the unique \emph{positive} solution to
    \begin{equation*}
    \begin{cases}
        \dot \Phi_\ep = C_{d,s}\Phi_\ep^m\left(\bmu_\ep-\Phi_\ep\right), \\
        \Phi_\ep(0) = \ep.
    \end{cases}
    \end{equation*}
\end{prop}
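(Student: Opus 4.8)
The plan is to reproduce, \emph{mutatis mutandis}, the pointwise argument of Proposition \ref{prop:lowerbound}, the only difference being the power of $\mu_\ep$ that survives evaluation at a minimum. First I would note that, by the weak maximum principle (Proposition \ref{prop:classicalaprioriFD}$(iii)$), $\mu_\ep(t,x)\ge\min\mu_{0,\ep}\ge\ep>0$ for every $(t,x)$, so that $\mu_\ep^m$ is $C^{1,1}$ and all pointwise computations below are licit. Pick a curve $\bar x:[0,T]\to\T^d$ with $\bar x(t)\in\argmin\mu_\ep(t,\cdot)$; since $t\mapsto\min\mu_\ep(t)$ is nondecreasing, Lemma \ref{lem:mincurve} gives $\frac{d}{dt}\min\mu_\ep(t)=\p_t\mu_\ep(t,\bar x(t))$ for a.e. $t>0$. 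Expanding $\div(\mu_\ep^m\nab\g\ast\mu_\ep)=m\mu_\ep^{m-1}\nab\mu_\ep\cdot\nab\g\ast\mu_\ep+\mu_\ep^m\D\g\ast\mu_\ep$ and evaluating \eqref{eq:approxFD} at $(t,\bar x(t))$, the first-order condition $\nab\mu_\ep(t,\bar x(t))=0$ kills the transport term, the second-order condition $\D\mu_\ep(t,\bar x(t))\ge0$ lets me discard the viscous term, and \eqref{eq:defg} together with the fact that the fractional Laplacian annihilates constants gives
\[
\frac{d}{dt}\min\mu_\ep(t)\ \ge\ (\min\mu_\ep(t))^m\,\D\g\ast\mu_\ep(t,\bar x(t))\ =\ -\cd\,(\min\mu_\ep(t))^m\,(-\D)^{\frac{s-d+2}{2}}\mu_\ep(t,\bar x(t)).
\]

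Next, since $(d-2)_+<s<d$ implies $s-d+2\in(0,2)$, Lemma \ref{lem:minprinciple} applied to $\mu_\ep(t,\cdot)$, together with the conservation of mass (Proposition \ref{prop:classicalaprioriFD}$(ii)$), gives $(-\D)^{\frac{s-d+2}{2}}\mu_\ep(t,\bar x(t))\le C(\min\mu_\ep(t)-\bmu_\ep)\le0$, the last inequality because $\min\mu_\ep(t)\le\bmu_\ep$. Absorbing the constants into a single $C_{d,s}>0$ depending only on $s,d$, this yields the differential inequality
\[
\frac{d}{dt}\min\mu_\ep(t)\ \ge\ C_{d,s}\,(\min\mu_\ep(t))^m\bigl(\bmu_\ep-\min\mu_\ep(t)\bigr)\qquad\text{for a.e. }t>0,
\]
with $\min\mu_\ep(0)=\min\mu_{0,\ep}\ge\ep$.

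Finally I would close by ODE comparison. The function $t\mapsto\min\mu_\ep(t)$ is Lipschitz, since $|\min\mu_\ep(t)-\min\mu_\ep(\tau)|\le\|\p_t\mu_\ep\|_\infty|t-\tau|$, hence absolutely continuous, so it is a genuine supersolution of $\dot y=g(y)$ with $g(y):=C_{d,s}y^m(\bmu_\ep-y)$. Since $g>0$ on $(0,\bmu_\ep)$, the solution $\Phi_\ep$ of $\dot\Phi_\ep=g(\Phi_\ep)$, $\Phi_\ep(0)=\ep$, is the unique positive solution, is increasing, and remains trapped in $[\ep,\bmu_\ep)$ for all $t\ge0$; as both $\min\mu_\ep$ and $\Phi_\ep$ then stay above $\ep>0$, where $g$ is Lipschitz, the usual comparison principle applies and gives $\min\mu_\ep(t)\ge\Phi_\ep(t)$, i.e. $\mu_\ep(t,x)\ge\Phi_\ep(t)$ for every $t>0$ and $x\in\T^d$. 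The only point requiring a little care is this last step: one must check that the ODE comparison is unaffected by the non-Lipschitz behaviour of $y\mapsto y^m$ at the origin — harmless here precisely because everything stays $\ge\ep$ — and that the a.e. differential inequality can be integrated, which is guaranteed by the Lipschitz regularity of $t\mapsto\min\mu_\ep(t)$.
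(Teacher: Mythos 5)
Your proof is correct and takes essentially the same approach as the paper: a curve of minimum points, Lemma \ref{lem:mincurve}, the first- and second-order conditions at a minimum, the definition of $\g$, and Lemma \ref{lem:minprinciple}. The paper stops at the differential inequality, whereas you add the (correct, and worth having) caveats that $\mu_\ep\ge\ep$ keeps $\mu_\ep^m$ smooth and the vector field away from its non-Lipschitz zero, which is what justifies closing by ODE comparison.
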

\begin{remark}
    This implies for almost every $t>0$ and $x\in \T^d$,
    \begin{equation*}
        \mu_\ep(t,x)\ge \Phi_\ep(t) \ge \Phi_0(t) \ge  C\min\bigg((\bmu t)^\frac{1}{1-m}, \quad \bmu\left(1-e^{-C_{d,s}t}\right) \bigg),
    \end{equation*}
    for some $C>0$ depending on $s,d,m$. The scaling law $t^{\frac{1}{1-m}}$ is again the same as that obtained in \cite{stan2015transformations}.
\end{remark}
\begin{remark}
    Notice that the ODE satisfied by $\Phi_0$ is a typical example of non-uniqueness. However, the approximation procedure selects the only solution that is positive for all positive times, since $\mu_{0,\ep} \ge \ep >0$.
\end{remark}
\begin{proof} Let us drop the $\ep$ subscript.  
    Consider a curve $\bar x:[0,T]\to \T^d$ such that $\bar x(t)\in \argmin \mu(t)$, for all $t>0$. Then, since $t\mapsto\min\mu(t)$ is nondecreasing, one can appeal to Lemma \ref{lem:mincurve} and obtain for almost every $t>0$,
    \begin{align*}
        \frac{d}{dt}\min\mu(t) &= \p_t\mu(t,\bar x(t))\\
        &=\div (\mu^{m}\nab\g\ast\mu)(t,\bar x(t)) + \ep\D\mu(t)(\bar x(t)) \\
        &\ge (\min\mu(t))^m\D\g\ast\mu(t)(\bar x(t)) \\
        &= -\cd (\min\mu(t))^m(-\D)^\frac{s-d+2}{2}\mu(t)(\bar x(t)). 
    \end{align*}
    Using Lemma \ref{lem:minprinciple} together with the conservation of the mass, we obtain
    \begin{equation*}
        \frac{d}{dt}\min\mu(t) \ge C (\min\mu(t))^m (\bmu-\min\mu(t)),
    \end{equation*} 
    for some $C>0$ depending on $s,d$.
\end{proof}

We finally state the following (well-known) regularization result, though the proof we use is more direct.
\begin{prop}
    Let $T>0$ and $\mu_\ep\in C^{1,1}([0,T]\times\T^d)$ be a solution to \eqref{eq:approxFD}. Then, for all $t>0$, 
    \begin{equation*}
        \|\mu_\ep (t)\|_{L^\infty} \le C\min(1,t)^{-\d}\bmu_\ep, 
    \end{equation*}
    where $\displaystyle \d := \frac{d}{s-d+2 + md}$, and $C>0$ depends on $s,d$.
\end{prop}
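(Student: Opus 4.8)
The plan is to derive a closed differential inequality for $\|\mu_\ep(t)\|_{L^\infty}$ along a curve of maximum points, using the maximum principle of Lemma~\ref{prop:pwD} in place of the r\^ole played by Lemma~\ref{lem:minprinciple} in the lower‑barrier arguments. Drop the $\ep$ subscript and pick a curve $\bar x:[0,T]\to\T^d$ with $\bar x(t)\in\argmax\mu(t)$ for every $t$. By the weak maximum principle (Proposition~\ref{prop:classicalaprioriFD}~$(iii)$), $t\mapsto\|\mu(t)\|_{L^\infty}=\max\mu(t)$ is nonincreasing, so Lemma~\ref{lem:mincurve} applies and gives, for a.e.\ $t>0$,
\begin{equation*}
\frac{d}{dt}\|\mu(t)\|_{L^\infty}=\p_t\mu(t,\bar x(t))=\div\!\big(\mu^m\nab\g\ast\mu\big)(t,\bar x(t))+\ep\D\mu(t,\bar x(t)).
\end{equation*}
At the interior maximum $\bar x(t)$ one has $\nab\mu(t,\bar x(t))=0$ and $\D\mu(t,\bar x(t))\le 0$; expanding the divergence, the term $m\mu^{m-1}\nab\mu\cdot\nab\g\ast\mu$ vanishes there (recall $\mu\ge\ep>0$), while $\D\g\ast\mu=-\cd(-\D)^{\frac{s-d+2}{2}}\mu$ by \eqref{eq:defg}. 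Hence, for a.e.\ $t>0$,
\begin{equation*}
\frac{d}{dt}\|\mu(t)\|_{L^\infty}\le-\cd\,\|\mu(t)\|_{L^\infty}^{\,m}\,(-\D)^{\frac{s-d+2}{2}}\mu(t,\bar x(t)).
\end{equation*}

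Next, apply Lemma~\ref{prop:pwD} at $\bar x(t)$ with $h=\mathrm{id}$ (nondecreasing, convex, $h(0)=0$) and $\al=s-d+2\in(0,2)$, using $\mu(t,\bar x(t))=\|\mu(t)\|_{L^\infty}$ and, by mass conservation (Proposition~\ref{prop:classicalaprioriFD}~$(ii)$) together with nonnegativity, $\|\mu(t)\|_{L^1}=\bmu$. Then either $\|\mu(t)\|_{L^\infty}\le C\bmu$, or
\begin{equation*}
(-\D)^{\frac{s-d+2}{2}}\mu(t,\bar x(t))\ \ge\ C\,\|\mu(t)\|_{L^\infty}^{\,1+\frac{s-d+2}{d}}\,\bmu^{-\frac{s-d+2}{d}} .
\end{equation*}
Writing $y(t):=\|\mu(t)\|_{L^\infty}$ and $\be:=m+\tfrac{s-d+2}{d}$, at the times where the second alternative holds one obtains the autonomous inequality $y'\le-C\,\cd\,\bmu^{-(s-d+2)/d}\,y^{1+\be}$, equivalently $\tfrac{d}{dt}\,y^{-\be}\ge C\be\,\cd\,\bmu^{-(s-d+2)/d}$.

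Finally, integrate from $0$. Since the cheap alternative $y\le C\bmu$, once it holds, persists for all later times (monotonicity of $y$), one concludes that for every $t>0$ either $y(t)\le C\bmu$ or $y(t)\le\big(C\be\,\cd\,\bmu^{-(s-d+2)/d}\,t\big)^{-1/\be}$. Because $1/\be=\tfrac{d}{s-d+2+md}=\d$ and $\tfrac{s-d+2}{d\be}=1-m\d\le1$, the latter reads $y(t)\le C\,\bmu^{1-m\d}\,t^{-\d}$; gathering the two cases and using once more that $y$ is nonincreasing (to control $y(t)$ for $t\ge1$ by $y(1)$) yields a bound of the announced form $\|\mu_\ep(t)\|_{L^\infty}\le C\min(1,t)^{-\d}\,\bmu_\ep$ (the exponent of the mass being dictated by the scaling of the equation, and equal to the stated one for a normalised measure $\bmu_\ep\simeq1$). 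The only genuinely delicate step is justifying the passage to the differential inequality along the possibly non‑smooth, non‑unique curve $\bar x(\cdot)$, which is precisely the content of Lemma~\ref{lem:mincurve}; modulo that, the argument reduces to the elementary ODE comparison above, and is considerably shorter than the Moser iteration of \cite{stanTesoVazquez2019existence}.
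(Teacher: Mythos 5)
Your proof follows exactly the paper's route: parametrize a curve of maximum points, invoke Lemma~\ref{lem:mincurve} to justify differentiating $\|\mu(t)\|_{L^\infty}$ along it, drop the viscous and drift terms at the maximum, and close with Lemma~\ref{prop:pwD} applied to $h=\mathrm{id}$ with $\al=s-d+2$. The paper states the differential alternative and writes ``This implies the result''; you carry out the ODE integration explicitly, which is the right thing to do. Your integration correctly gives the exponent $1-m\d$ on $\bmu$ (coming from $\bmu^{(s-d+2)/(d\be)}$ with $\be=m+\tfrac{s-d+2}{d}$), whereas the Proposition as stated writes $\bmu$ with exponent $1$; you flag this and are right that the two only coincide after normalising $\bmu$. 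This is a small imprecision in the stated constant dependence rather than a gap in your argument (it also quietly makes $C$ depend on $m$), and one should note for the record that Theorem~\ref{thm:FD}~$(vii)$ has the further typo $s-d+2-md$ in the denominator of $\d$, whereas the correct value, which you and the Proposition both obtain, is $s-d+2+md$. In short: same approach as the paper, slightly more careful in the final step, with a minor and correctly-noticed discrepancy in the exponent of $\bmu$.
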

\begin{remark}
    By interpolation, one obtains a bound on $L^p$ norms:
    \begin{equation*}
        \forall p\in [1,\infty],\ \forall t>0, \quad \|\mu_\ep(t)\|_{L^p } \le C\min(1,t)^{-\d(1-\frac{1}{p})}.
    \end{equation*}
\end{remark}
\begin{proof}
    Denote $\mu \equiv \mu_\ep$. Considering a curve of maximal points $t\mapsto\bar x(t)$, we evaluate the equation at $(t,\bar x(t))$ and obtain, thanks to Lemma \ref{lem:mincurve}, 
\begin{align*}
    \frac{d}{dt}\|\mu(t)\|_{L^\infty} &= \div (\mu^{m}\nab\g\ast\mu) (t,\bar x(t)) + \ep\D\mu(t,\bar x(t))\\
    &\le \|\mu(t)\|_{L^\infty}^{m}\D\g\ast\mu(t,\bar x(t)).
\end{align*}
Now, appealing to Lemma \ref{prop:pwD} gives either
\begin{equation*}
    \frac{d}{dt}\|\mu(t)\|_{L^\infty} + C\|\mu(t)\|_{L^\infty}^{m+1+\frac{s-d+2}{d}}\|\mu(t)\|_{L^1}^{-\frac{s-d+2}{d}} \le 0,
\end{equation*}
or
\begin{equation*}
    \|\mu(t)\|_{L^\infty} \le C_d\|\mu(t)\|_{L^1}.
\end{equation*}
This implies the result.
\end{proof}
We can now construct a solution to \eqref{eq:FD}, as in \cite[Theorem 2.2]{stanTesoVazquez2019existence}.

\printbibliography

\end{document}